\newtheorem{theorem}{Theorem}
\newtheorem{proposition}[theorem]{Proposition}
\DeclareMathOperator{\tr}{tr}
\DeclareMathOperator{\im}{im}
\DeclareMathOperator{\rank}{rank}
\begin{document}

\title[Competitive bimolecular oscillators]{Some minimal bimolecular mass-action\\ systems with limit cycles}
\author{Bal\'azs Boros}
\address{Department of Mathematics, University of Vienna\\
Oskar-Morgenstern-Platz 1, 1090 Vienna, Austria}
\email{balazs.boros@univie.ac.at}
\thanks{BB was supported by the Austrian Science Fund (FWF), project P32532.}

\author{Josef Hofbauer}
\address{Department of Mathematics, University of Vienna\\
Oskar-Morgenstern-Platz 1, 1090 Vienna, Austria}
\email{josef.hofbauer@univie.ac.at}

\subjclass{34C25, 34C12, 34C23}
\keywords{bimolecular network, competitive system, permanence, limit cycle, Andronov--Hopf bifurcation, focal value}



\begin{abstract}
We discuss three examples of bimolecular mass-action systems with three species, due to Feinberg, Berner, Heinrich, and Wilhelm. Each system has a unique positive equilibrium which is unstable for certain rate constants and then exhibits stable limit cycles, but no chaotic behaviour. For some rate constants in the Feinberg--Berner system, a stable equilibrium, an unstabe limit cycle, and a stable limit cycle coexist. All three networks are minimal in some sense.

By way of homogenising these three examples, we construct bimolecular mass-conserving mass-action systems with four species that admit a stable limit cycle. The homogenised Feinberg--Berner system and the homogenised Wilhelm--Heinrich system admit the coexistence of a stable equilibrium, an unstable limit cycle, and a stable limit cycle.
\end{abstract}

\maketitle

\section{Introduction}

We proved recently that rank-two bimolecular mass-action systems do not admit limit cycles \cite[Section 4]{boros:hofbauer:2022a}. Thus, to construct simple mass-action systems with limit cycles, one could focus either on rank-two trimolecular (or tetramolecular) networks or rank-three bimolecular networks. The former direction is followed in \cite[Section 3]{boros:hofbauer:2022a}, while the latter one is the topic of the present paper. We analyse three known three-species bimolecular mass-action systems that are all minimal in some sense:
\begin{enumerate}[(i)]
\item The underlying reaction network of the \emph{Feinberg--Berner oscillator} (studied in \Cref{sec:F}) is reversible with $5$ complexes and $4$ reversible reactions. As we will discuss, any reversible bimolecular mass-action system with a limit cycle has at least $5$ complexes and at least $4$ reversible reactions.
\item The r.h.s.\ of the differential equation of the \emph{Wilhelm--Heinrich oscillator} (studied in \Cref{sec:WH}) has only one nonlinear term. No linear differential equation admits a limit cycle.
\item The underlying reaction network of the \emph{Wilhelm oscillator} (studied in \Cref{sec:W}) has only $4$ reactions. As we will discuss, any bimolecular mass-action system with a limit cycle has at least $4$ reactions.
\end{enumerate}
Having available the inheritance results summarised in \cite[Theorem 2]{banaji:2022}, it is apparent that finding small networks with limit cycles is a useful tool for studying the capacity for oscillations in more complex networks.

Since physically realistic systems are often mass-conserving, it is also interesting to find small mass-conserving bimolecular networks whose associated mass-action system admits limit cycle oscillation. Following \cite[Section 4]{banaji:boros:hofbauer:2022}, we homogenise each of the above three-species oscillators, and thereby construct three four-species mass-conserving bimolecular mass-action systems that all exhibit limit cycles.

After collecting the necessary preliminaries in \Cref{sec:prelim}, we study the (homogenised) Feinberg--Berner oscillator, the (homogenised) Wilhelm--Hein\-rich oscillator, and the (homogenised) Wilhelm oscillator in \Cref{sec:F,sec:WH,sec:W}, respectively.
\section{Preliminaries} \label{sec:prelim}

In this section we briefly summarise the necessary background and terminology. For more details on reaction network theory, consult e.g. \cite{yu:craciun:2018} and the references therein. The symbols $\mathbb{R}_+$, $\mathbb{R}_{\geq0}$, and $\mathbb{Z}_{\geq0}$ denote the set of positive reals, nonnegative reals, and nonnegative integers, respectively. 

\subsection{Reaction networks and mass-action systems}

A \emph{reaction network} (or \emph{network} for short) consists of species, complexes, and reactions. For given \emph{species} $\mathsf{X}_1,\ldots,\mathsf{X}_n$, \emph{complexes} are formal linear combinations $a_1\mathsf{X}_1 + \cdots + a_n\mathsf{X}_n$ with nonnegative integers $a_1,\ldots,a_n$. A \emph{reaction} is an ordered pair of a reactant complex and a product complex. With the symbols $V\subseteq\mathbb{Z}^n_{\geq0}$ and $E\subseteq V \times V$ denoting the set of complexes and the set of reactions, respectively, $(V,E)$ is a directed graph. The \emph{reaction vector} associated to a reaction $a_1\mathsf{X}_1 + \cdots + a_n\mathsf{X}_n \to b_1\mathsf{X}_1 + \cdots + b_n\mathsf{X}_n$ is the vector $(b_1-a_1,\ldots,b_n-a_n)^\top$. The \emph{stoichiometric matrix} is denoted by $\Gamma$, its columns are the reaction vectors. By definition, the \emph{rank} of a network is $\rank\Gamma$.

The vector $x(\tau)\in\mathbb{R}^n_{\geq0}$ encodes the concentrations of the $n$ species at time $\tau$. Assuming \emph{mass-action kinetics}, the \emph{rate function} associated to a reaction $a_1\mathsf{X}_1 + \cdots + a_n\mathsf{X}_n \to b_1\mathsf{X}_1 + \cdots + b_n\mathsf{X}_n$ is $\kappa x_1^{a_1} \cdots x_n^{a_n}$, where $\kappa>0$ is called the \emph{rate constant} of the reaction. The rate functions of the individual reactions are collected in the vector $v(x)$. The species concentrations then evolve according to the mass-action differential equation $\dot{x}(\tau) = \Gamma v(x(\tau))$ with state space $\mathbb{R}^n_{\geq0}$. Notice that its solutions are confined to the linear manifolds $(p+\im \Gamma)\cap\mathbb{R}^n_{\geq0}$ with $p\in\mathbb{R}^n_{\geq0}$, termed \emph{stoichiometric classes}. The sets $(p+\im \Gamma)\cap\mathbb{R}^n_+$  with $p\in\mathbb{R}^n_+$ are called \emph{positive stoichiometric classes} and are  also known to be  forward invariant. A reaction network is \emph{mass-conserving} if the stoichiometric classes are bounded (or equivalently, there exists a $d \in \mathbb{R}^n_+$ such that $d^\top\Gamma = 0$).

\subsection{Deficiency}

We now recall some special cases of the classical Deficiency-Zero Theorem \cite[Theorem 3.1]{feinberg:1979} and the classical Deficiency-One Theorem \cite[Theorem 3.2]{feinberg:1979}. The \emph{deficiency} of a network $(V,E)$ is the nonnegative integer $|V|-\ell-\rank\Gamma$, where $\ell$ is the number of connected components of the directed graph $(V,E)$. A directed graph is \emph{strongly connected} if for any two vertices $i$ and $j$ there exists a directed path from $i$ to $j$.
\begin{theorem}\label{thm:dfc0_noLC}
Deficiency-zero mass-action systems do not admit periodic solutions in the positive orthant $\mathbb{R}^n_+$.
\end{theorem}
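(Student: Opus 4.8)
The plan is to dichotomise according to whether the network is \emph{weakly reversible}, i.e.\ whether every connected component of $(V,E)$ is strongly connected. Writing $\Gamma = YB$, where the columns of $Y$ are the complexes and $B$ is the incidence matrix of the digraph $(V,E)$ (entry $-1$ at the reactant complex and $+1$ at the product complex of each reaction), I would first record the standard identity $\delta = \dim(\ker Y \cap \im B)$ for the deficiency $\delta$; it follows from $\rank B = |V| - \ell$ together with $\im\Gamma = Y(\im B)$. Hence the hypothesis $\delta = 0$ is equivalent to $\ker Y \cap \im B = \{0\}$.

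Suppose for contradiction that $\tau \mapsto x(\tau)$ is a periodic solution of period $T$ in $\mathbb{R}^n_+$. Integrating $\dot x = \Gamma v(x)$ over one period and using $x(T)=x(0)$ gives $\Gamma \bar v = 0$ for the averaged flux $\bar v = \tfrac1T \int_0^T v(x(\tau))\,d\tau$. Since $v(x) > 0$ componentwise on $\mathbb{R}^n_+$, the vector $\bar v$ is strictly positive. Now $Y B \bar v = 0$ places $B\bar v$ in $\ker Y \cap \im B = \{0\}$, so $B \bar v = 0$: the positive vector $\bar v$ is a circulation on the reaction digraph. A strictly positive circulation forces every edge to lie on a directed cycle, so each connected component of $(V,E)$ is strongly connected; that is, the network is necessarily weakly reversible. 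This disposes of every non-weakly-reversible network at once.

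It remains to rule out periodic orbits when the network is weakly reversible. Here I would invoke the other conclusion of the Deficiency-Zero Theorem: a weakly reversible deficiency-zero system is \emph{complex-balanced}, so each positive stoichiometric class contains a (unique) equilibrium $x^*$ at which the net flux through every complex vanishes. For such $x^*$ the Horn--Jackson pseudo-Helmholtz function
\[
V(x) = \sum_{i=1}^n \Bigl[ x_i\bigl(\ln\tfrac{x_i}{x_i^*} - 1\bigr) + x_i^* \Bigr]
\]
is a strict Lyapunov function: one computes $\dot V(x) = \bigl(\ln\tfrac{x}{x^*}\bigr)^\top \Gamma v(x) \le 0$, with equality exactly at equilibria. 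Because $V$ is strictly decreasing along every non-constant trajectory, it cannot return to its initial value, so no periodic orbit can exist in $\mathbb{R}^n_+$.

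The main obstacle is the verification that $\dot V \le 0$ with equality only at equilibrium. This is the technical core of the Horn--Jackson theory: substituting $\Gamma = YB$ and the complex-balancing relation reduces $\dot V$ to a sum over reactions of terms controlled by the inequality $a - b + b\ln\tfrac{b}{a} \ge 0$ (equivalently, strict convexity of $t \mapsto t\ln t$), and tracking the equality case pins the trajectory to the equilibrium set. I regard the averaging argument of the second paragraph as routine; the Lyapunov estimate is where the real work lies, and I would either reproduce the Horn--Jackson computation or simply cite it as the substance of the Deficiency-Zero Theorem.
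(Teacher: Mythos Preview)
The paper does not prove this theorem at all: it is merely stated in the preliminaries as a special case of Feinberg's classical Deficiency-Zero Theorem, with a citation to \cite[Theorem 3.1]{feinberg:1979}. Your proposal therefore goes well beyond what the paper offers.

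The argument you sketch is the standard one and is correct. The averaging step in your second paragraph is a clean way to force weak reversibility from the existence of a periodic orbit in $\mathbb{R}^n_+$; indeed the same computation applied to a constant solution recovers the companion statement that a non-weakly-reversible deficiency-zero system has no positive equilibrium either. The Lyapunov half is precisely Horn--Jackson, and your assessment that the real substance lies in verifying $\dot V \le 0$ with equality only at equilibria is accurate. In the spirit of the paper a bare citation would suffice, but your sketch is sound.
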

\begin{theorem}\label{thm:dfc1}
If the deficiency-one network $(V,E)$ is strongly connected and $\rank\Gamma=n$ holds then its associated mass-action system has exactly one positive equilibrium.
\end{theorem}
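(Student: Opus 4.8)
The plan is to recast the system in the complex-balancing formalism and reduce the existence-and-uniqueness question to a scalar monotonicity statement. Writing the mass-action system as $\dot x = Y A_\kappa \Psi(x)$, where $Y$ is the $n\times|V|$ matrix whose columns are the complexes, $A_\kappa$ is the weighted graph-Laplacian of $(V,E)$ (columns summing to zero), and $\Psi(x)=(x^{y})_{y\in V}$ is the vector of monomials, a positive equilibrium is an $x\in\mathbb R^n_+$ with $Y A_\kappa\Psi(x)=0$. The sign structure forced by deficiency one is what I would exploit throughout.

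First I would record the two linear-algebraic consequences of the hypotheses. Since $(V,E)$ is strongly connected, the Matrix--Tree Theorem gives $\ker A_\kappa=\mathbb R\omega$ with $\omega\gg0$, and $\rank A_\kappa=|V|-1$, so $\im A_\kappa=\mathbf 1^\perp$ has codimension one. Next, $\rank\Gamma=n$ forces $\rank Y=n$ (from $\rank(Y A_\kappa)\le\rank Y\le n$), hence $\dim\ker Y=|V|-n$; combined with $|V|-1-\rank\Gamma=1$ this yields $\dim\ker Y=2$. Because the deficiency equals $\dim(\ker Y\cap\im A_\kappa)$ (for a strongly connected graph $\im A_\kappa=\im\Gamma$'s incidence image), the space $\ker Y\cap\im A_\kappa$ is one-dimensional, say $\mathbb R b$. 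Crucially $b\in\im A_\kappa\subseteq\mathbf 1^\perp$, so $\sum_i b_i=0$ and $b$ is sign-changing.

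The equilibrium condition $Y A_\kappa\Psi(x)=0$ is equivalent to $A_\kappa\Psi(x)\in\ker Y\cap\im A_\kappa=\mathbb R b$, i.e.\ $A_\kappa\Psi(x)=\mu b$ for some $\mu\in\mathbb R$. Fixing $c^*$ with $A_\kappa c^*=b$ (so $c^*\notin\mathbb R\omega$), the admissible values of $\Psi(x)$ lie in the two-dimensional space $W=\mathbb R\omega\oplus\mathbb R c^*$, while simultaneously $\Psi(x)$ must lie on the toric surface $\mathcal C=\{c\gg0:\ln c\in\im Y^\top\}$, equivalently $\ln c\perp\ker Y$. Writing $\ker Y=\operatorname{span}\{b,w\}$, the orthogonality $\langle\ln c,b\rangle=0$ is invariant under rescaling $c$ because $\sum_i b_i=0$; hence on the pointed cone $W\cap\mathbb R^{|V|}_+$ it depends only on the ray, and parametrizing the rays by a single real parameter reduces it to the scalar equation $\phi(\mu):=\sum_i b_i\ln(\omega_i+\mu c^*_i)=0$ on the interval where $c(\mu)=\omega+\mu c^*\gg0$. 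The remaining orthogonality $\langle\ln c,w\rangle=0$ then merely pins down the scale.

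The main obstacle is the strict-monotonicity lemma for $\phi$. Its derivative is $\phi'(\mu)=\sum_i b_i c^*_i/c_i(\mu)$, and showing that this has constant sign is the deficiency-one incarnation of the convexity underlying Birch's theorem and the deficiency-zero uniqueness result: one must combine the Laplacian identity $b=A_\kappa c^*$ with the strict concavity of $\ln$ to exclude sign changes. Given monotonicity, uniqueness is immediate, and existence follows from the boundary behaviour of $\phi$ as $c(\mu)$ approaches the relative boundary of $W\cap\mathbb R^{|V|}_+$, where a vanishing coordinate drives $\phi$ to $\pm\infty$ according to the sign of the corresponding entry of $b$, so that the sign-changing nature of $b$ forces a root. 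I expect this monotonicity-and-boundary sign analysis to be the delicate part, whereas the linear-algebraic reduction is routine; the degenerate possibility $\mathbf 1\in\im Y^\top$ (which would give $\langle\cdot,w\rangle\perp\mathbf 1$ and hence a continuum of equilibria) must be excluded, but it is incompatible with $\rank\Gamma=n$.
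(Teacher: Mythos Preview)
The paper does not give a proof of this statement: it is recorded in the preliminaries as a special case of Feinberg's classical Deficiency-One Theorem, with a bare citation to \cite[Theorem~3.2]{feinberg:1979}, and no argument is supplied. There is therefore nothing in the paper to compare your proposal against.

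On its own merits, your reduction is sound. The dimension counts are correct (in particular $\dim(\ker Y\cap\im A_\kappa)=\delta=1$ follows cleanly from $\rank(YA_\kappa)=\rank\Gamma=n$ via rank--nullity), the passage from $YA_\kappa\Psi(x)=0$ to $A_\kappa\Psi(x)\in\mathbb R b$ is right, and your observation that $\mathbf 1\in\im Y^\top$ is incompatible with $\rank\Gamma=n$ correctly disposes of the scaling degeneracy. This is exactly the mechanism by which deficiency one collapses the equilibrium problem to a single real parameter, and it is in the spirit of Feinberg's own argument.

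Where the proposal is genuinely incomplete is precisely at the two places you yourself flag. First, the monotonicity of $\phi(\mu)=\langle b,\ln(\omega+\mu c^*)\rangle$ is not a consequence of concavity of $\ln$ alone: $\phi'(\mu)=\sum_i b_ic^*_i/c_i(\mu)$ is a sum with mixed signs (since $b$ is sign-changing), and nothing you have written prevents it from vanishing. Feinberg's uniqueness argument does not proceed by establishing monotonicity of this particular scalar function; it uses additional sign structure coming from the Laplacian $A_\kappa$. Second, the existence argument via boundary behaviour is also incomplete: at an endpoint of the interval where $c(\mu)\gg0$ some coordinate $c_i(\mu)$ vanishes, but you have not argued that the corresponding $b_i$ is nonzero, nor that the two endpoints force opposite signs of $\phi$; several coordinates could vanish simultaneously, and the interval could be a half-line. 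None of these obstacles is insurmountable, but closing them is the actual content of the Deficiency-One Theorem and requires substantially more than what the sketch provides.
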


\subsection{Permanence}

The mass-action differential equation $\dot{x}(\tau) = \Gamma v(x(\tau))$ is said to be \emph{permanent in a positive stoichiometric class} $\mathcal{P}$  if there exists a compact and forward invariant set $K \subseteq \mathcal{P}$ with the property that for each solution $\tau \mapsto x(\tau )$ with $x(0) \in \mathcal{P}$, there exists a $\tau_0 \geq 0$ such that for all $\tau \geq \tau_0$ we have $x(\tau) \in K$. A mass-action system is \emph{permanent} if it is permanent in every positive stoichiometric class. The following theorem was proved in \cite[Theorem 1.3]{gopalkrishnan:miller:shiu:2014}, \cite[Theorem 5.5]{anderson:cappelletti:kim:nguyen:2020}, and \cite[Theorem 4.2]{boros:hofbauer:2020}.
\begin{theorem}\label{thm:permanence1}
If the network $(V,E)$ is strongly connected then its associated mass-action system is permanent.
\end{theorem}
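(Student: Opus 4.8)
The plan is to establish permanence through the classical two-step strategy: show that, within each positive stoichiometric class $\mathcal P=(p+\im\Gamma)\cap\mathbb{R}^n_{\geq0}$ with $p\in\mathbb{R}^n_+$, the dynamics is dissipative, and that the boundary $\partial\mathbb{R}^n_{\geq0}$ acts as a repeller. Since $(V,E)$ is strongly connected, the network is in particular weakly reversible with a single linkage class, and this structural feature is what I would exploit in both steps.

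First I would produce a compact, forward invariant absorbing set in each class $\mathcal P$. Applying the Matrix--Tree theorem to the graph Laplacian of $(V,E)$ yields a strictly positive vector indexed by the complexes lying in its kernel; from this one assembles a pseudo-Helmholtz free energy function whose sublevel sets, intersected with $\mathcal P$, are compact and forward invariant and along which trajectories decrease once they are far out. (Equivalently, one may use that strongly connected networks are strongly endotactic and invoke the dissipativity estimates underlying the cited theorems.) This reduces the problem to the flow on a compact forward invariant set $K_{\mathcal P}\subseteq\mathcal P$, leaving only a neighbourhood of $\partial\mathbb{R}^n_{\geq0}$ to control.

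Next I would show that $\partial\mathbb{R}^n_{\geq0}\cap\mathcal P$ is a repeller by means of an average Lyapunov function of Hofbauer type. Take $P(x)=\prod_{i=1}^n x_i^{p_i}$ with weights $p_i>0$ to be determined; then $P$ vanishes exactly on the boundary, is positive on $\mathbb{R}^n_+$, and satisfies $\dot P/P=\sum_{i=1}^n p_i\,(\Gamma v(x))_i/x_i=:\psi(x)$. Permanence then follows once the $p_i$ are chosen so that $\int\psi\,d\mu>0$ for every invariant probability measure $\mu$ supported on $\partial\mathbb{R}^n_{\geq0}\cap\mathcal P$. Strong connectivity is exactly what makes this possible: no proper, nonempty set of species can be a siphon, since if the species present on a boundary face were closed under the reactions, then, tracing a directed path in $(V,E)$ out of a complex supported on those species, strong connectivity would force every complex to be supported on them. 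Hence near any proper face some absent species $\mathsf X_i$ is produced, its net rate $(\Gamma v)_i$ is bounded away from $0$ there, and the term $p_i(\Gamma v)_i/x_i$ makes $\psi$ positive on average.

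The main obstacle I anticipate is twofold. First, $\psi$ is not continuous up to the boundary, because the production terms make $(\Gamma v)_i/x_i$ blow up as $x_i\to0$; I would therefore work with a regularised comparison function or with the $\liminf$ form of the averaging criterion. Second, and more seriously, the inequality $\int\psi\,d\mu>0$ must be verified for invariant measures that may be spread across several faces at once, which forces a simultaneous and correctly weighted treatment of all boundary faces. I would organise this by ordering the faces in a tier structure and arguing from the lowest-dimensional faces upward, calibrating the weights $p_i$ against the positive equilibrium that a weakly reversible system admits in each positive stoichiometric class. Combining this repelling-boundary conclusion with the dissipativity of the first step then produces, in each positive stoichiometric class, the compact forward invariant absorbing set demanded by the definition of permanence.
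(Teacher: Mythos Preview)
The paper does not supply a proof of this statement; it is quoted in the preliminaries as a background result, with citations to Gopalkrishnan--Miller--Shiu, Anderson--Cappelletti--Kim--Nguyen, and Boros--Hofbauer. So there is no in-paper argument to compare against. Your sketch is broadly in the spirit of those references: the strongly endotactic route you mention parenthetically is exactly the Gopalkrishnan--Miller--Shiu argument, and the tier-structure ordering of boundary faces you invoke at the end is the mechanism in the Anderson et al.\ paper.

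That said, your primary dissipativity step does not work as written. The pseudo-Helmholtz free energy built from the Matrix--Tree kernel vector is a Lyapunov function for \emph{complex-balanced} systems, but a strongly connected network is complex balanced only for special rate constants (deficiency zero, or rate constants satisfying the toric relations). For a generic deficiency-one strongly connected network---such as the Feinberg--Berner system studied in this very paper---no complex-balanced equilibrium exists, and the free energy need not decrease along trajectories. So this line has to be replaced by, not merely supplemented with, the strongly endotactic argument you mention in parentheses. Similarly, the siphon claim (``no proper, nonempty set of species can be a siphon'') is not true for strongly connected networks in general; weakly reversible networks can have critical siphons, and the real content of the cited results is that trajectories from the interior nevertheless do not approach the corresponding faces---which is a substantially harder statement than the absence of siphons and is precisely what the tier arguments or the strongly endotactic machinery establish.
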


We also use special cases of \cite[Theorems 4.1 and 4.2]{hofbauer:so:1989} to prove permanence. Though the results in \cite{hofbauer:so:1989} are phrased for discrete-time systems, they also hold in the continuous-time case with almost identical proofs. Let $\Delta$ be the simplex $\{x\in \mathbb{R}^n_{\geq0} \colon x_1+\cdots+x_n=c\}$ for some $c>0$, and $F = (F^{\tau})_{\tau \geq 0}$ a continuous semi-flow on $\Delta$ such that the relative interior $\Delta^\circ$ of $\Delta$ is forward invariant. As above, we say that $F$ is permanent if there exists a compact and forward invariant set $K \subseteq \Delta^\circ$ such that for each $x \in \Delta^\circ$  there exists a $\tau_0 \geq 0$ such that for all $\tau \geq \tau_0$ we have $F^{\tau}(x) \in K$. 
Let $M$ be the maximal invariant subset (i.e., the union of all complete orbits) of $\partial \Delta = \Delta \setminus \Delta^\circ$ under $F$. The \emph{stable set} of a compact invariant set $K$ is defined as $W^s(K) = \{ x\in \Delta \colon \omega(x) \subseteq K\}$. We say that a compact invariant set $K$ is \emph{isolated} in $\Delta$ if there is a neighborhood $U$ of $K$ in $\Delta$ such that $K$ is the maximal invariant set in $U$.
\begin{theorem}\label{thm:permanence2a}
If $M$ is isolated in $\Delta$ and $W^s(M)\subseteq \partial \Delta$ then $F$ is permanent.
\end{theorem}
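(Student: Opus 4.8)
The plan is to prove the stronger property of \emph{uniform persistence} and then extract from it the compact forward-invariant absorbing set demanded by the definition of permanence. Since $\Delta$ is compact, every forward orbit is precompact and each $\omega$-limit set $\omega(x)$ is nonempty, compact, and invariant. I will repeatedly use one structural consequence of the hypothesis that $\Delta^\circ$ is forward invariant: within any invariant set, the part lying in $\Delta^\circ$ is forward invariant, so a backward orbit through a point $y\in\partial\Delta$ of an invariant set cannot enter $\Delta^\circ$ (otherwise $y$ itself would be interior); hence such backward orbits remain in $\partial\Delta$ and their $\alpha$-limit sets, being invariant subsets of $\partial\Delta$, lie in $M$.

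First I would establish \emph{weak persistence}: $\omega(x)\subseteq\Delta^\circ$ for every $x\in\Delta^\circ$. Suppose instead that $\omega(x)\cap\partial\Delta\neq\emptyset$ for some $x\in\Delta^\circ$. If $\omega(x)\subseteq\partial\Delta$, then $\omega(x)$ is an invariant subset of $\partial\Delta$, so $\omega(x)\subseteq M$ and thus $x\in W^s(M)$, contradicting $W^s(M)\subseteq\partial\Delta$ because $x\in\Delta^\circ$. Otherwise $\omega(x)$ meets both $\Delta^\circ$ and $\partial\Delta$; choosing $y\in\omega(x)\cap\partial\Delta$, the observation above gives $\alpha(y)\subseteq M$ with $\alpha(y)\subseteq\omega(x)$, so $\omega(x)\cap M\neq\emptyset$ while $\omega(x)\not\subseteq M$. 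Because $M$ is isolated, the Butler--McGehee lemma yields a point $z\in\omega(x)\cap(W^s(M)\setminus M)$. By hypothesis $z\in W^s(M)\subseteq\partial\Delta$; since $z\notin M$, the forward orbit of $z$ cannot stay in $\partial\Delta$ (otherwise the whole orbit of $z$ would lie in $\partial\Delta$, forcing $z\in M$), so it enters $\Delta^\circ$ at some time $t_0$, and then $w=F^{t_0}(z)\in\Delta^\circ$ satisfies $\omega(w)=\omega(z)\subseteq M$, i.e.\ $w\in W^s(M)\cap\Delta^\circ$, again contradicting $W^s(M)\subseteq\partial\Delta$. Hence weak persistence holds.

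The main work, and the main obstacle, is to upgrade weak persistence to \emph{uniform} persistence, namely to produce a single $\eta>0$ with $\liminf_{t\to\infty}d(F^t x,\partial\Delta)\geq\eta$ for all $x\in\Delta^\circ$; the qualitative statement alone does not give this, since interior orbits could linger near $\partial\Delta$ for longer and longer times as the starting point varies. Here the isolatedness of $M$ is essential. I would argue by contradiction via a limiting-trajectory construction: if uniformity failed, one extracts from interior orbits that spend arbitrarily long stretches in a small neighborhood of $M$, using compactness of $\Delta$ and continuity of the semiflow, a complete trajectory that is forward asymptotic to $M$ and is accumulated by points of $\Delta^\circ$. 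Since the relevant boundary invariant set is covered by the single set $M$, the acyclicity hypothesis of the general persistence theory is automatic (there is no cycle of distinct invariant sets to exclude), and one is forced into the situation that such a trajectory lies in $W^s(M)$ while being approached from $\Delta^\circ$; tracing its base point into $\Delta^\circ$ exactly as in the weak step then contradicts $W^s(M)\subseteq\partial\Delta$. This compactness/continuity step, which converts the pointwise repulsion into a uniform one, is the crux.

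Finally, from uniform persistence I would recover permanence in the sense of the definition. With the uniform constant $\eta$ in hand, the set $\Omega=\overline{\bigcup_{x\in\Delta^\circ}\omega(x)}$ is a compact invariant subset of $\{x\in\Delta\colon d(x,\partial\Delta)\geq\eta\}\subseteq\Delta^\circ$ that attracts every interior orbit, and a standard thickening-and-flowing construction produces a compact forward-invariant set $K\subseteq\Delta^\circ$ containing $\Omega$ that absorbs all solutions starting in $\Delta^\circ$, as required. I expect the first and last steps to be routine given the hypotheses; the uniformity upgrade is where the real argument lies.
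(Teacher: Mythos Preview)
The paper does not give its own proof of this statement: Theorem~\ref{thm:permanence2a} is quoted without proof as a special case of \cite[Theorems 4.1 and 4.2]{hofbauer:so:1989}, with the remark that the continuous-time version follows with almost identical arguments. So there is no in-paper argument to compare against.

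Your outline is a reasonable rendition of the standard persistence-theory proof. The weak-persistence step is correct: your use of forward invariance of $\Delta^\circ$ to force backward orbits of boundary points (within an invariant $\omega$-limit set) to remain in $\partial\Delta$, followed by the Butler--McGehee lemma, is the right mechanism, and your dichotomy on whether the forward orbit of the Butler--McGehee point $z$ stays in $\partial\Delta$ cleanly handles the case where $\partial\Delta$ is not assumed forward invariant. The upgrade to uniform persistence is, as you say, the crux; your sketch via a limiting complete trajectory accumulated from $\Delta^\circ$ and asymptotic to $M$ is the correct strategy, and with a single isolated $M$ covering the boundary invariant set the acyclicity hypothesis is indeed vacuous, so the Butler--Freedman--Waltman machinery collapses to exactly the argument you indicate. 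The final passage from uniform persistence to a compact forward-invariant absorbing set is routine on a compact $\Delta$. In short, your proposal is correct in substance; it is the continuous-time analogue of the cited discrete-time result rather than an alternative route.
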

\begin{theorem}\label{thm:permanence2b}
Let $M_1, M_2 \subseteq M$ be invariant subsets of $M$, such that $\alpha(x) \subseteq M_1$  and $\omega(x) \subseteq M_2$ hold for all $x \in M \setminus (M_1 \cup M_2)$. If both $M_1$ and $M_2$ are isolated in $\Delta$ and their stable sets satisfy $W^s(M_i)\subseteq \partial\Delta$ then $F$ is permanent.
\end{theorem}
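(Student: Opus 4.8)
The plan is to prove permanence by showing that the boundary $\partial\Delta$ is a uniform repeller for the semiflow. Since $\Delta$ is compact and $\Delta^\circ$ is forward invariant, dissipativity is automatic, and producing the compact forward-invariant absorbing set $K\subseteq\Delta^\circ$ demanded by the definition of permanence is equivalent to exhibiting a $\delta>0$ with $\liminf_{t\to\infty}\operatorname{dist}(F^t x,\partial\Delta)\ge\delta$ for every $x\in\Delta^\circ$; the set $K$ is then obtained from $\delta$ by a routine construction. The structural hypothesis says precisely that the maximal boundary invariant set $M$ carries an \emph{acyclic} isolated decomposition: every orbit of $M$ outside $M_1\cup M_2$ runs from $M_1$ (its $\alpha$-limit) to $M_2$ (its $\omega$-limit), so $\{M_1,M_2\}$ covers $\bigcup_{x\in M}(\alpha(x)\cup\omega(x))$ and admits no cyclic chain of connecting orbits (there is no orbit returning from $M_2$ to $M_1$).

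First I would rule out interior orbits that actually converge to the boundary, i.e.\ establish $W^s(M)\subseteq\partial\Delta$. If $x\in\Delta^\circ$ satisfies $\omega(x)\subseteq\partial\Delta$, then $\omega(x)\subseteq M$ by maximality of $M$; since $\omega(x)$ is a nonempty compact connected internally chain transitive invariant set and $\{M_1,M_2\}$ is acyclic, the standard confinement result (internally chain transitive sets lie in a single element of an acyclic covering) forces $\omega(x)\subseteq M_1$ or $\omega(x)\subseteq M_2$. Either way $x\in W^s(M_1)\cup W^s(M_2)\subseteq\partial\Delta$, contradicting $x\in\Delta^\circ$. This step uses only the weak-repeller hypotheses $W^s(M_i)\subseteq\partial\Delta$.

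The main work, and the step I expect to be the real obstacle, is to upgrade this to uniform persistence, excluding interior orbits whose $\omega$-limit merely \emph{touches} $\partial\Delta$. Rather than try to verify directly that $M$ itself is isolated in $\Delta$ (which seems no easier than permanence, as the boundary dynamics says nothing about interior invariant sets clustering near $\partial\Delta$), I would argue at the isolated pieces $M_1,M_2$ via the Butler--McGehee lemma. Suppose some $x\in\Delta^\circ$ had $\omega(x)\cap M\neq\emptyset$. Because each $M_i$ is isolated in the ambient $\Delta$ and is a weak repeller, Butler--McGehee yields that $\omega(x)$ cannot be contained in a single $M_i$ (that case was just excluded), and that whenever $\omega(x)$ meets $M_i$ it must also contain points of $W^u(M_i)\setminus M_i$ and of $W^s(M_i)\setminus M_i$ (the unstable, resp.\ stable, set of $M_i$). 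Following these connecting pieces produces a chain among $M_1$ and $M_2$; since on the boundary orbits flow only in the direction $M_1\to M_2$, while the transverse unstable and stable directions are pinned by $W^s(M_i)\subseteq\partial\Delta$, acyclicity prevents this chain from closing into a cycle and instead drives it to a contradiction. Making this Butler--McGehee chaining precise --- combining isolation in $\Delta$, the weak-repeller property, and acyclicity to preclude any boundary-to-boundary recurrence of an interior orbit --- is the heart of the matter, and is exactly the continuous-time transcription of the argument behind Theorem~4.2 of Hofbauer--So. With $\partial\Delta$ established as a uniform repeller, permanence follows.
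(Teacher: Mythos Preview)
The paper does not give its own proof of this theorem: together with the preceding \Cref{thm:permanence2a} it is quoted as a special case of Theorems~4.1 and~4.2 of Hofbauer--So, accompanied only by the remark that the continuous-time analogues hold ``with almost identical proofs.'' So there is no in-paper argument to compare against. That said, your plan --- view $\{M_1,M_2\}$ as an acyclic Morse-type decomposition of $M$, confine any interior $\omega$-limit set lying entirely in $\partial\Delta$ to a single $M_i$ via internal chain transitivity (hence to $W^s(M_i)\subseteq\partial\Delta$, a contradiction), and then run a Butler--McGehee chaining at the isolated pieces to rule out interior orbits whose $\omega$-limit merely meets $\partial\Delta$ --- is precisely the standard route underlying the cited Hofbauer--So result, so your approach agrees with the source the paper defers to. The one place that would need extra care in a full write-up is the chaining step: once Butler--McGehee produces $q\in(W^s(M_i)\setminus M_i)\cap\omega(x)\subseteq\partial\Delta$, you must argue that the complete backward orbit of $q$ (which exists inside the invariant set $\omega(x)$) stays in $\partial\Delta$ --- this follows from forward invariance of $\Delta^\circ$ --- so that $\alpha(q)\subseteq M$ and the structural hypothesis on $M$ can be applied to locate $\alpha(q)$ in some $M_j$ and drive the acyclicity contradiction.
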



\subsection{Molecularity}

The \emph{molecularity} of a complex $a_1\mathsf{X}_1 + \cdots + a_n\mathsf{X}_n$ is the nonnegative integer $a_1+\cdots+a_n$. We say that a reaction network is \emph{bimolecular} if every complex in the network has molecularity at most two. The following theorem is proven in \cite[Section 4]{boros:hofbauer:2022a}.
\begin{theorem}\label{thm:ranktwo_bimolec}
Rank-two bimolecular mass-action systems do not admit limit cycles.
\end{theorem}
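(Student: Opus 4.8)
The plan is to exploit that $\rank\Gamma=2$ forces the dynamics on each stoichiometric class to be essentially planar, and then to invoke the classical fact that planar systems of the resulting type cannot support isolated periodic orbits. First I would reduce to two dimensions. Fix a stoichiometric class $\mathcal{P}=(p+\im\Gamma)\cap\mathbb{R}^n_{\geq0}$ with $p\in\mathbb{R}^n_+$; since $\dim\im\Gamma=2$, choosing a basis $w_1,w_2$ of $\im\Gamma$ and writing $x=p+u_1w_1+u_2w_2$ identifies the relative interior of $\mathcal{P}$ with an open planar region $\Omega\subseteq\mathbb{R}^2$, and the restriction of $\dot x=\Gamma v(x)$ becomes a planar vector field $\dot u=g(u)$. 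Because the network is bimolecular, every rate function $v_j$ is a monomial of degree at most two, so $g$ is a polynomial vector field of degree at most two on $\Omega$.

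The core of the argument is to rule out periodic orbits of $g$ in $\Omega$. I would use the Bendixson--Dulac criterion: it suffices to produce a $C^1$ Dulac function $B\colon\Omega\to\mathbb{R}_+$ such that $\diverg(Bg)$ is not identically zero and has one fixed sign throughout $\Omega$. The natural candidate, following the Lotka--Volterra tradition, is a reciprocal-monomial weight, i.e.\ the restriction to $\mathcal{P}$ of $x\mapsto x_1^{-\beta_1}\cdots x_n^{-\beta_n}$ for suitably chosen exponents $\beta_i\geq0$. Since $\mathcal{P}\cap\mathbb{R}^n_+$ is forward invariant and $\Omega$ is simply connected, sign-definiteness of $\diverg(Bg)$ then yields the absence of closed orbits, hence of limit cycles, in the interior of every stoichiometric class.

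The delicate point --- and the step I expect to be the main obstacle --- is verifying that a choice of exponents makes $\diverg(Bg)$ sign-definite for every rank-two bimolecular system. Here the hypotheses must be used in full: bimolecularity bounds the degree of each monomial term, while $\rank\Gamma=2$ constrains the reaction vectors to a two-dimensional space and thereby limits which quadratic contributions can appear in $g$. I would organise this by classifying, up to the linear change of coordinates above together with relabelling of species, the reactant monomials compatible with a two-dimensional stoichiometric subspace, reducing the sign computation to finitely many normal forms.

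An equivalent and perhaps cleaner formulation is to show that in appropriate coordinates the reduced planar field is \emph{competitive} (or cooperative), so that the conclusion also follows from the fact that planar monotone flows admit no nonconstant periodic orbits. This is precisely the two-dimensional counterpart of the competitive structure that, in three dimensions, permits the examples of \Cref{sec:F,sec:WH,sec:W} to oscillate while --- by the Poincar\'e--Bendixson theory for competitive systems --- precluding chaotic behaviour.
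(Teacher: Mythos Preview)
The paper does not actually prove \Cref{thm:ranktwo_bimolec}; it is quoted from \cite[Section~4]{boros:hofbauer:2022a} as background. So there is no proof in the present paper to compare your proposal against, and I can only comment on the proposal itself.

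Your reduction step is fine: a rank-two stoichiometric class is an affine plane intersected with $\mathbb{R}^n_{\geq0}$, its relatively open positive part is convex (hence simply connected), and the reduced vector field $g$ is polynomial of degree at most two because each $x_i$ is affine in $(u_1,u_2)$ and bimolecularity caps the total degree of every rate monomial at two. So far so good.

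The genuine gap is that you do not establish the Dulac function or the competitive structure; you only \emph{propose} that one should exist and defer the verification to an unspecified ``classification into finitely many normal forms''. Two remarks show why this step cannot be waved through. First, planar quadratic vector fields can certainly have limit cycles (this is the subject of Hilbert's 16th problem, and examples with four limit cycles are known), so the degree bound alone buys nothing: the special mass-action structure of $g$ must be used in an essential way, and your outline does not say how. Second, your proposed classification is ``up to relabelling of species'', but the number of species $n$ is not bounded in a rank-two network, so it is not evident that the list of normal forms is finite or that a single monomial weight $x_1^{-\beta_1}\cdots x_n^{-\beta_n}$ can be chosen uniformly. The alternative suggestion that the reduced field is competitive in suitable coordinates is likewise a hope rather than an argument: you would need to exhibit a cone in the $(u_1,u_2)$-plane preserved by the linearisations, and nothing in the proposal indicates why bimolecularity forces this.

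In short, the architecture (planar reduction followed by Bendixson--Dulac) is sensible, but the proof lives entirely in the step you label ``the main obstacle'', and that step is not carried out.
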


\subsection{Homogenisation}

We now recall a construction, \emph{homogenisation}, from \cite[Section 4]{banaji:boros:hofbauer:2022}. Start with a mass-action system that admits $r\geq1$ hyperbolic limit cycles and add a new species $\mathsf{X}_{n+1}$ to each reaction in the network in a way that the reactant and the product complex have the same molecularity in the new network (notice that the new network as a directed graph might not be isomorphic to the original one). Then, by \cite[Theorem 1]{banaji:boros:hofbauer:2022}, the new mass-action system admits at least $r$ hyperbolic limit cycles. Limit cycles that are born via Andronov--Hopf bifurcation are hyperbolic (at least for parameters that are close enough to the critical value).

\subsection{Competitive systems}

Finally, we recall some basic facts about competitive systems \cite{hirsch:smith:2005}. With $(a_1,b_1)$, $(a_2,b_2)$, $(a_3,b_3)$ being (possibly unbounded) open intervals in $\mathbb{R}$ let $D=(a_1,b_1)\times(a_2,b_2)\times(a_3,b_3)$. Further, let $f\colon D \to \mathbb{R}^3$ be analytic, and consider the differential equation
\begin{align}\label{eq:fgh}
\dot{x} = f(x).
\end{align}
Let $J(x)$ denote the Jacobian matrix of the r.h.s.\ of \eqref{eq:fgh} at $x\in D$. The system \eqref{eq:fgh} is called \emph{competitive} in $D$ if all the off-diagonal entries of $J$ are nonpositive everywhere in $D$. The system \eqref{eq:fgh} is called \emph{irreducible} in $D$ if $J$ is irreducible everywhere in $D$.

The most important result about three-dimensional competitive systems is the Poincar\'e--Bendixson Theorem, due to Morris Hirsch and Hal Smith, see \cite[Theorem 3.23]{hirsch:smith:2005}. In particular, it excludes chaotic behaviour.
\begin{theorem}
Assume that the differential equation \eqref{eq:fgh} is  competitive in $D$. Then every nonempty compact $\omega$-limit set either contains an equilibrium or is a periodic orbit.
\end{theorem}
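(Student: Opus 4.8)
The plan is to reduce the three-dimensional competitive case to the classical planar Poincar\'e--Bendixson theorem, exploiting the order structure that competitiveness imposes on the flow. First I would pass to the time-reversed system $\dot{x} = -f(x)$, whose Jacobian $-J$ has nonnegative off-diagonal entries everywhere in $D$; that is, the reversed system is \emph{cooperative}. Since $D$ is a product of intervals, it is convex and order-convex, so the classical Kamke--M\"uller comparison theorem applies: the flow $\phi$ of a cooperative system is \emph{monotone} with respect to the partial order $\leq$ induced by the orthant $\mathbb{R}^3_{\geq0}$, meaning $x \leq y$ implies $\phi_\tau(x) \leq \phi_\tau(y)$ for all $\tau\geq0$. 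Consequently the $\omega$-limit sets of the original competitive system are exactly the $\alpha$-limit sets of the associated cooperative monotone flow, and it suffices to analyse limit sets of a monotone flow.

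The key structural step is the \emph{Nonordering Principle}: a nonempty compact limit set $L$ of a monotone flow contains no two distinct points related by $\leq$; that is, $L$ is an antichain. I would prove this by contradiction, assuming $p,q\in L$ with $p<q$, and using monotonicity together with the recurrence of $p$ and $q$ (both are limit points of the single trajectory whose limit set is $L$) to construct along that trajectory a strictly order-increasing sequence of points confined to the compact set $\overline{\{\phi_\tau(x)\}}$. Such a sequence must converge, and monotone convergence forces its limit to be comparable to its own image under a positive-time map, which is incompatible with $p\neq q$ unless an equilibrium relation holds. This monotone-systems argument is the technically heaviest part and is where I expect the main obstacle to lie; I emphasise that it requires only monotonicity (Kamke comparison), not irreducibility, which is consistent with the hypotheses of the statement.

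The third step uses dimension three decisively. An antichain $L\subseteq\mathbb{R}^3$ for the orthant order cannot contain two points differing by a multiple of the diagonal direction $\mathbf{1}=(1,1,1)\in\operatorname{int}(\mathbb{R}^3_{\geq0})$, since any such pair would be comparable. Hence the projection $\pi$ of $\mathbb{R}^3$ along $\mathbf{1}$ onto the plane $H=\{v\colon v_1+v_2+v_3=0\}$ is injective on $L$, and the antichain condition in fact makes $L$ the graph of a Lipschitz function over $\pi(L)\subseteq H$. The inverse $(\pi|_L)^{-1}$ is then a homeomorphism onto $\pi(L)$ that transports the flow restricted to $L$ to a continuous flow on the planar region $\pi(L)$; because $\pi$ respects the order, this conjugacy preserves the qualitative orbit structure, matching equilibria, closed orbits, and connecting orbits on $L$ with those on $\pi(L)$.

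Finally I would invoke the classical planar Poincar\'e--Bendixson theorem for the induced flow on $\pi(L)\subseteq H\cong\mathbb{R}^2$: a nonempty compact limit set of a planar flow either contains an equilibrium or is a single periodic orbit. Transporting this dichotomy back through $(\pi|_L)^{-1}$ yields the same conclusion for $L$, which is precisely the assertion. The only genuinely three-dimension-specific ingredient is the Lipschitz-graph reduction of the third step, where the one-dimensional fibre over a two-dimensional base is essential; the Nonordering Principle of the second step is the main obstacle, whereas the reduction to the plane and the final application of the planar theorem are comparatively routine once $L$ is known to be an antichain.
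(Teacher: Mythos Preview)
The paper does not prove this theorem; it is quoted as a preliminary result from the literature and attributed to Hirsch and Smith via \cite[Theorem 3.23]{hirsch:smith:2005}. So there is no in-paper proof to compare against. Your outline is precisely the standard Hirsch--Smith strategy from that reference: reverse time to obtain a cooperative (monotone) flow, establish that compact limit sets are unordered, use dimension three to present an unordered compact set as a Lipschitz graph over a $2$-plane, and then carry out a Poincar\'e--Bendixson argument.

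Two technical remarks on your sketch. First, the Nonordering Principle for a merely monotone semiflow (no irreducibility assumed) yields that $L$ contains no two points related by the \emph{strong} order $p\ll q$; your stronger claim that $L$ is an antichain for $\leq$ is more than you need and more than the argument you indicate will deliver. The $\ll$-version suffices for the projection step, since distinct points with the same $\pi$-image differ by a nonzero multiple of $\mathbf{1}$ and are therefore strongly comparable. Second, and more substantively, the final step does not work as stated. The induced flow lives only on the compact set $\pi(L)\subseteq H$, which is neither an open planar domain nor a priori a $2$-manifold, and the classical planar Poincar\'e--Bendixson theorem is formulated for flows on open regions of $\mathbb{R}^2$ where local transversals and the Jordan curve argument are available in a neighbourhood of the limit set. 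You cannot simply invoke it for a flow defined on an arbitrary compact subset of $\mathbb{R}^2$. In the Hirsch--Smith argument one does not project and then quote the planar theorem; one reruns the Poincar\'e--Bendixson machinery directly in $\mathbb{R}^3$, using small disks transverse to $\mathbf{1}$ as cross-sections and using the unorderedness of $L$ to show that the trajectory through $x$ meets each such disk in a sequence that is monotone for a planar order on the disk, hence convergent. That adaptation is where the three-dimensional hypothesis is actually spent, and it is less routine than your last paragraph suggests.
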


The following theorem is due to Hsiu-Rong Zhu and Hal Smith \cite[Theorem 1.2]{zhu:smith:1994}.
\begin{theorem}\label{thm:zhu_smith}
Assume that the differential equation \eqref{eq:fgh} has the following three properties.
\begin{enumerate}[(A)]
    \item It is permanent in $D$. 
    \item It is competitive and irreducible in $D$.
    \item The domain $D$ contains a unique equilibrium $x^*$, the determinant of $J(x^*)$ is negative, and $x^*$ is unstable.
\end{enumerate}
Then there is at least one but no more than finitely many periodic orbits and at least one of them is orbitally asymptotically stable. \end{theorem}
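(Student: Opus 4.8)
The plan is to reduce this global statement to three ingredients: a local eigenvalue analysis at $x^*$, the Poincar\'e--Bendixson theorem for three-dimensional competitive systems stated above, and the monotonicity of the Poincar\'e return map attached to a cycle.

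First I would analyse the linearisation at $x^*$. By competitiveness (B) the matrix $-J(x^*)$ has nonnegative off-diagonal entries, i.e.\ it is a Metzler matrix, and by irreducibility (B) it is irreducible; hence Perron--Frobenius theory provides a simple real eigenvalue of $-J(x^*)$ whose real part is strictly the largest and whose eigenvector is componentwise positive. Translating back, $J(x^*)$ has a simple real eigenvalue $\lambda_1$ of strictly smallest real part, with positive eigenvector $v$. I would then combine this with (C): the product of the three eigenvalues equals $\det J(x^*)<0$, and instability forces some eigenvalue to have positive real part. A short case distinction (the remaining two eigenvalues form either a real pair or a complex-conjugate pair, whose product is positive in the complex case) shows that necessarily $\lambda_1<0$ while the other two eigenvalues have positive real part. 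Consequently the stable manifold $W^s(x^*)$ is one-dimensional and tangent at $x^*$ to the positive direction $v$, while the unstable manifold $W^u(x^*)$ is two-dimensional. This dichotomy is the decisive structural consequence of the sign condition $\det J(x^*)<0$.

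Next I would establish existence of a periodic orbit. By permanence (A) there is a compact forward-invariant absorbing set in $D$, so every $\omega$-limit set is a nonempty compact subset of the interior. Taking any $p\in W^u(x^*)\setminus\{x^*\}$, its $\omega$-limit set $\omega(p)$ is such a set, and by the Poincar\'e--Bendixson theorem for competitive systems stated above it is either a periodic orbit or contains the unique equilibrium $x^*$. To exclude the latter I would exploit that $p$ lies on the two-dimensional $W^u(x^*)$ whereas $W^s(x^*)$ is only one-dimensional and monotone along the positive direction $v$: the condition $x^*\in\omega(p)$ would require a homoclinic return, and the order-preserving property of the flow prevents $W^u$-trajectories from accumulating onto $x^*$ from both sides of the monotone curve $W^s(x^*)$. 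Hence $\omega(p)$ is a periodic orbit $\gamma$, giving existence.

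Finally, stability and finiteness. I would introduce a Poincar\'e return map $P$ on a two-dimensional cross-section transverse to $\gamma$. For three-dimensional competitive flows this map is monotone with respect to the order cone, and by the theory of competitive systems \cite{hirsch:smith:2005} the relevant limit sets project injectively onto a plane; this reduces the global picture to a nested, ordered family of cycles encircling the repelling equilibrium $x^*$. Analyticity of $f$ makes $P$ analytic and thereby excludes a continuum of cycles, yielding the finiteness. The \textbf{main obstacle} is the stability claim: using the monotonicity of $P$ one argues that the nested cycles cannot all be repelling, and since the innermost cycles surround the unstable $x^*$ while permanence pushes the outer orbits inward, some cycle must attract from both sides and hence be orbitally asymptotically stable. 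Making this monotone-return-map argument rigorous---in particular the cone- and orientation-preserving properties inherited from competitiveness, as in \cite{zhu:smith:1994}---is the technical heart of the proof.
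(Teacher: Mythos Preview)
The paper does not prove this statement at all; Theorem~\ref{thm:zhu_smith} is quoted verbatim from \cite[Theorem~1.2]{zhu:smith:1994} as a preliminary tool and is used later only as a black box (in the proof of Theorem~\ref{thm:F_original}(iii)). There is therefore nothing in the paper to compare your proposal against.

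That said, your outline tracks the genuine Zhu--Smith argument closely: the Perron--Frobenius analysis of the irreducible Metzler matrix $-J(x^*)$ to force $\lambda_1<0$ with a positive eigenvector and the other two eigenvalues in the right half-plane, the use of the competitive Poincar\'e--Bendixson theorem on the $\omega$-limit of a point in $W^u(x^*)$, and the monotone Poincar\'e map plus analyticity for finiteness and the existence of an attracting cycle. The places where your sketch is thin are precisely the hard parts of \cite{zhu:smith:1994}: the exclusion of a homoclinic return (your single sentence about order preservation and the monotone one-dimensional $W^s(x^*)$ is the right geometric intuition but is not yet an argument), and the passage from ``nested cycles, repelling equilibrium inside, permanence outside'' to ``some cycle is two-sided attracting,'' which in the original paper requires a careful analysis of the monotone return map and the ordering of cycles. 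As a roadmap your proposal is correct; as a proof it still defers the substantive work to \cite{zhu:smith:1994}, which is exactly what the present paper does as well.
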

Throughout the paper, whenever we write \emph{stable limit cycle}, we mean it is orbitally asymptotically stable.
\section{Feinberg--Berner oscillator} \label{sec:F}

In 1979, Martin Feinberg introduced the mass-action system
\begin{align}\label{eq:F_original_ode}
\begin{aligned}
\begin{tikzpicture}[scale=1.7]

\node (P1) at (1,0)  {$\mathsf{Y}$};
\node (P2) at (0,0)  {$\mathsf{Z}$};
\node (P3) at (0,1)  {$\mathsf{X}$};
\node (P4) at (1,1)  {$2\mathsf{X}$};
\node (P5) at (-1,0) {$\mathsf{X} + \mathsf{Y}$};

\draw[arrows={-stealth},transform canvas={yshift=-2pt}] (P1) to node[below] {$\kappa_8$} (P2);
\draw[arrows={-stealth},transform canvas={yshift=2pt}] (P2) to node[above] {$\kappa_7$} (P1);
\draw[arrows={-stealth},transform canvas={xshift=2pt}] (P2) to node[right] {$\kappa_3$} (P3);
\draw[arrows={-stealth},transform canvas={xshift=-2pt}] (P3) to node[left] {$\kappa_4$} (P2);
\draw[arrows={-stealth},transform canvas={yshift=2pt}] (P3) to node[above] {$\kappa_5$} (P4);
\draw[arrows={-stealth},transform canvas={yshift=-2pt}] (P4) to node[below] {$\kappa_6$} (P3);
\draw[arrows={-stealth},transform canvas={yshift=-2pt}] (P2) to node[below] {$\kappa_2$} (P5);
\draw[arrows={-stealth},transform canvas={yshift=2pt}] (P5) to node[above] {$\kappa_1$} (P2);

\node at (3.4,1/2){$\begin{aligned}
\dot{x} &= - \kappa_1 x y + \kappa_2 z + \kappa_3 z - \kappa_4 x + \kappa_5 x - \kappa_6 x^2, \\
\dot{y} &= - \kappa_1 x y + \kappa_2 z + \kappa_7 z - \kappa_8 y, \\
\dot{z} &= \kappa_1 x y - \kappa_2 z - \kappa_3 z + \kappa_4 x - \kappa_7 z + \kappa_8 y
\end{aligned}$};

\end{tikzpicture}
\end{aligned}
\end{align}
in \cite[Example 3.D.3]{feinberg:1979} as a perturbation of the Edelstein mass-action system \cite{edelstein:1970}. The Edelstein network is obtained from \eqref{eq:F_original_ode} by deleting the reversible reaction $\mathsf{X} \rightleftarrows \mathsf{Z}$ (which amounts to setting $\kappa_3=\kappa_4=0$ in the differential equation). By the Deficiency-One Theorem (\Cref{thm:dfc1} in \Cref{sec:prelim}), the differential equation \eqref{eq:F_original_ode} has exactly one positive equilibrium for all choices of rate constants. This holds despite the fact that the Edelstein mass-action system does admit multiple positive equilibria (for some rate constants in some stoichiometric class).

The additional purpose of Feinberg for presenting example \eqref{eq:F_original_ode} is to demonstrate that not all of the conclusions of the Deficiency-Zero Theorem hold true for networks under the scope of the Deficiency-One Theorem. In short, though the existence and the uniqueness of the positive equilibrium hold true, its asymptotic stability in general does not follow.
According to \cite[Remark 3.5]{feinberg:1979}, Paul Berner identified rate constants for which the unique positive equilibrium of \eqref{eq:F_original_ode} is unstable and numerical solutions suggest the existence of a stable limit cycle. The same remark is made in \cite[page 108]{feinberg:1980} and \cite[page 66]{feinberg:1981}. The mass-action system \eqref{eq:F_original_ode} appears further in \cite[Remark 6.2.B]{feinberg:1987} and \cite[(4.12)]{feinberg:1995a}. To our knowledge, the only place, where specific rate constants are given for which the unique positive equilibrium is claimed to be unstable is \cite[(8.5)]{feinberg:2019}. However, for those rate constants all the three eigenvalues of the Jacobian matrix at the unique positive equilibrium are real and negative, and thus, the equilibrium is linearly stable. After we pointed this out, Feinberg noticed (and kindly let us know via private communication) that the values of $\kappa_3$ and $\kappa_4$ are accidentally swapped in \cite[(8.5)]{feinberg:2019} due to a typographical error. Indeed, setting the values of $\kappa_3$ and $\kappa_4$ correctly, one finds that the equilibrium is unstable, as claimed in \cite[Section 8.3]{feinberg:2019}.

Below we prove that there exist rate constants such that the unique positive equilibrium of the mass-action system \eqref{eq:F_original_ode} is unstable and is surrounded by a stable limit cycle. Furthermore, we also prove that there exist rate constants such that the positive equilibrium is asymptotically stable and is surrounded by two limit cycles (an unstable and a stable one).

Before we turn to \Cref{thm:F_original}, the analysis of the mass-action system \eqref{eq:F_original_ode}, we discuss in what sense the network is minimal. A reaction network is called \emph{reversible} if along with any reaction $i \to j$, the reverse reaction $j \to i$ is also present in the network. The network \eqref{eq:F_original_ode} is reversible and it has $5$ complexes and $4$ reversible reactions. The following proposition states that neither the number of complexes nor the number of reactions could be less for a reversible bimolecular network whose associated mass-action system admits a limit cycle.

\begin{proposition}\label{prop:FB_minimal}
Any reversible bimolecular network whose associated mass-action system admits a limit cycle has
\begin{enumerate}[(i)]
\item at least $5$ complexes and
\item at least $4$ reversible reactions.
\end{enumerate}
\end{proposition}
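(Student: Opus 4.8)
The plan is to derive both statements from two facts already available in the excerpt: the Deficiency-Zero Theorem (\Cref{thm:dfc0_noLC}) and the nonexistence of limit cycles for rank-two bimolecular systems (\Cref{thm:ranktwo_bimolec}). Throughout, suppose the network has $|V|$ complexes and $\ell$ linkage classes (connected components of the directed reaction graph), write $s=\rank\Gamma$, and let $\delta=|V|-\ell-s$ denote the deficiency. Because the network is reversible, each linkage class is strongly connected, so the linkage classes coincide with the connected components of the underlying undirected reaction graph, and in particular $\ell\geq1$.

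First I would pin down two inequalities. A limit cycle is a periodic orbit confined to a stoichiometric class of dimension $s$, so it forces $s\geq2$; since $s=2$ is excluded for bimolecular systems by \Cref{thm:ranktwo_bimolec} (while $s\leq1$ yields one-dimensional, hence non-oscillatory, dynamics), we conclude $s\geq3$. Next, a limit cycle is in particular a periodic solution in $\mathbb{R}^n_+$, so by \Cref{thm:dfc0_noLC} the deficiency cannot vanish; thus $\delta\geq1$.

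For part (i) I would simply combine these bounds with the definition of deficiency:
\[
|V|=\delta+\ell+s\geq 1+1+3=5.
\]
For part (ii) the key is to relate the number of reversible reactions to the graph structure. Each reversible reaction corresponds to exactly one edge of the underlying undirected reaction graph, so the number $m$ of reversible reactions equals the number of such edges. Since each of the $\ell$ linkage classes is connected, a spanning-forest argument (a connected graph on $k$ vertices has at least $k-1$ edges) gives $m\geq|V|-\ell$. Combining this with the deficiency identity $|V|-\ell=\delta+s$ yields
\[
m\geq|V|-\ell=\delta+s\geq 1+3=4.
\]

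The argument is short once the two cited theorems are invoked; the only step requiring genuine care is the passage in part (ii) from the number of reversible reactions to the edge count of the undirected graph together with the bound $m\geq|V|-\ell$, which relies on the connectivity of each linkage class guaranteed by reversibility. Everything else is bookkeeping with the deficiency identity.
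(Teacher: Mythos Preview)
Your argument is correct and follows essentially the same route as the paper: both rely on $s\geq 3$ from \Cref{thm:ranktwo_bimolec} and $\delta\geq 1$ from \Cref{thm:dfc0_noLC}, then combine these with the deficiency identity. The only difference is cosmetic: the paper argues (i) and (ii) by contradiction (with $|V|=4$, respectively $m=3$, one is forced into $\delta=0$), whereas you phrase the same counting as the direct inequalities $|V|=\delta+\ell+s\geq 5$ and $m\geq |V|-\ell=\delta+s\geq 4$, making explicit the spanning-forest bound that the paper leaves implicit under ``it is not hard to see.''
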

\begin{proof}
By \Cref{thm:ranktwo_bimolec}, the rank of a bimolecular mass-action system with a limit cycle is at least $3$. For the rank to be at least $3$, it must have at least $4$ complexes. However, with only $4$ complexes, the deficiency is $0$, and that precludes the possibility of a limit cycle by \Cref{thm:dfc0_noLC}. This concludes the proof of (i). If the network has only $3$ reversible reactions, and the rank of the network is $3$ then it is not hard to see that, again, the deficiency is zero, which, again, precludes the possibility of a limit cycle. This concludes the proof of (ii).
\end{proof}

\begin{theorem}\label{thm:F_original}
For the mass-action system \eqref{eq:F_original_ode} the following statements hold.
\begin{enumerate}[(i)]
\item The system is permanent.
\item The system is competitive.
\item If the unique positive equilibrium is unstable then there exists a stable limit cycle and there are only finitely many periodic orbits.
\item The unique positive equilibrium $(x^*,y^*,z^*)$ equals $(1,1,1)$ if and only if
\begin{align}\label{eq:111}
\kappa_3 = \kappa_4 \text{ and } \kappa_1 -\kappa_2 = \kappa_5 - \kappa_6 = \kappa_7-\kappa_8. 
\end{align}
\item If \eqref{eq:111} and $\kappa_2+2\kappa_4 \geq \kappa_1$ hold then $(1,1,1)$ is linearly stable.
\item Set $\kappa_1 = 1$, $\kappa_2=\frac15$, $\kappa_4=\frac15$ and eliminate $\kappa_3$, $\kappa_5$, $\kappa_7$ by \eqref{eq:111} (thus, the only free parameters are $\kappa_6>0$ and $\kappa_8>0$).
\begin{enumerate}[(a)]
\item Then the positive equilibrium $(1,1,1)$ is asymptotically stable on $\mathcal{H}_s$, undergoes an Andronov--Hopf bifurcation at $\mathcal{H}_c$, and is unstable on $\mathcal{H}_u$, where
\begin{align*}
\mathcal{H}_s &= \{(\kappa_6,\kappa_8)\in\mathbb{R}^2_+ \colon h(\kappa_6,\kappa_8)>0\},\\
\mathcal{H}_c &= \{(\kappa_6,\kappa_8)\in\mathbb{R}^2_+ \colon h(\kappa_6,\kappa_8)=0\},\\
\mathcal{H}_u &= \{(\kappa_6,\kappa_8)\in\mathbb{R}^2_+ \colon h(\kappa_6,\kappa_8)<0\}
\end{align*}
with $h(\kappa_6,\kappa_8)= 50 \kappa_6^2 \kappa_8 + 100 \kappa_6 \kappa_8^2 + 55 \kappa_6^2 + 260 \kappa_6 \kappa_8 + 50 \kappa_8^2  + 128 \kappa_6 + 40 \kappa_8  - 26$.
\item Along the curve $\mathcal{H}_c$, the first focal value is negative for $\kappa_6$ close to $0$, while it is positive for $\kappa_8$ close to $0$. Thus, the Andronov--Hopf bifurcation could be supercritical, subcritical, and degenerate.
\end{enumerate}
\item There exist rate constants such that the unique positive equilibrium is unstable and is surrounded by a stable limit cycle.
\item There exist rate constants such that the unique positive equilibrium is asymptotically stable and is surrounded by two limit cycles (an unstable and a stable one).
\end{enumerate}
\end{theorem}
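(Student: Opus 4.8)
The plan is to treat the eight claims in three layers: the structural facts (i)--(iii) that install the competitive Poincar\'e--Bendixson machinery, the explicit equilibrium and stability computations (iv)--(v), and the two-parameter bifurcation analysis (vi), from which (vii) and (viii) follow.

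For (i)--(iii) I would argue as follows. As a directed graph the network \eqref{eq:F_original_ode} is connected, and since it is reversible it is strongly connected; hence \Cref{thm:permanence1} yields permanence, which is (i). For (ii) I would write down the Jacobian $J(x,y,z)$ of the right-hand side of \eqref{eq:F_original_ode} and inspect its off-diagonal entries: the $(1,2)$ and $(2,1)$ entries are $-\kappa_1x$ and $-\kappa_1y$, negative in the positive orthant, whereas the four entries involving $z$ (the $(1,3),(2,3),(3,1),(3,2)$ entries) are positive; conjugating $J$ by $\mathrm{diag}(1,1,-1)$, i.e.\ reflecting $z\mapsto -z$, turns all six off-diagonal entries strictly negative, so \eqref{eq:F_original_ode} is competitive and irreducible for the associated orthant order. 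Part (iii) is then \Cref{thm:zhu_smith}: (A) is (i), (B) is (ii), and in (C) the uniqueness of the positive equilibrium is \Cref{thm:dfc1}, instability is the hypothesis, and only $\det J(x^*)<0$ remains. Rather than expanding blindly I would use the equilibrium relations $\kappa_4x=\kappa_3z$ (from $\dot y+\dot z=0$) and $y(\kappa_1x+\kappa_8)=(\kappa_2+\kappa_7)z$ (from $\dot y=0$) to collapse $\det J$ at $x^*$ to the manifestly negative form $-x^*\bigl[\kappa_1\kappa_4\kappa_8(\kappa_2+\kappa_7)/(\kappa_1x^*+\kappa_8)+\kappa_3\kappa_6(\kappa_1x^*+\kappa_8)\bigr]$.

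Claims (iv) and (v) are direct computations. Substituting $(x,y,z)=(1,1,1)$ into $\dot x=\dot y=\dot z=0$ and taking suitable combinations gives exactly $\kappa_3=\kappa_4$ together with $\kappa_1-\kappa_2=\kappa_5-\kappa_6=\kappa_7-\kappa_8$, i.e.\ \eqref{eq:111}, and the converse is the same computation reversed. Under \eqref{eq:111} the Jacobian at $(1,1,1)$ simplifies; writing its characteristic polynomial as $\lambda^3+c_2\lambda^2+c_1\lambda+c_0$, the Routh--Hurwitz criterion applies, where $c_2=-\tr J>0$ is immediate and $c_0=-\det J>0$ by the computation above, so linear stability reduces to $c_2c_1-c_0\geq 0$; I expect this quantity to be a nonnegative combination that is controlled by the factor $\kappa_2+2\kappa_4-\kappa_1$, yielding (v).

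The computational core is (vi). After $\kappa_1=1$, $\kappa_2=\kappa_3=\kappa_4=\tfrac15$, $\kappa_5=\kappa_6+\tfrac45$, $\kappa_7=\kappa_8+\tfrac45$, the Jacobian at $(1,1,1)$ depends only on $(\kappa_6,\kappa_8)$, and I would compute $c_2c_1-c_0$ symbolically and identify it, up to a positive factor, with $h(\kappa_6,\kappa_8)$; since $c_2,c_0>0$ throughout, the sign of $h$ governs stability, and on $\mathcal H_c=\{h=0\}$ the characteristic polynomial factors as $(\lambda+c_2)(\lambda^2+c_1)$ with $c_1>0$, giving a conjugate pair of purely imaginary eigenvalues that crosses the imaginary axis transversally as $(\kappa_6,\kappa_8)$ moves across $\mathcal H_c$ --- a genuine Andronov--Hopf bifurcation, which is (vi)(a). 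For (vi)(b) I would parametrise $\mathcal H_c$, feed the data into the first-focal-value formula of \Cref{sec:app}, and determine its sign as $\kappa_6\to 0$ and as $\kappa_8\to 0$; a negative value in the first regime and a positive value in the second exhibit a supercritical and a subcritical branch, hence a degenerate point with vanishing focal value in between by continuity. Part (vii) is then immediate: $\mathcal H_u$ is nonempty (as $h\to -26<0$ when $(\kappa_6,\kappa_8)\to 0$), so $(1,1,1)$ is unstable there and (iii) supplies the stable limit cycle. For (viii) I would sit on the subcritical branch, where the positive focal value means that crossing from $\mathcal H_u$ into $\mathcal H_s$ produces a small unstable limit cycle around the now asymptotically stable equilibrium; permanence confines every trajectory to a compact set away from $\partial\mathbb R^3_+$, and the competitive Poincar\'e--Bendixson theorem, together with the finiteness in (iii), then forces a further, outer limit cycle that is stable, establishing the coexistence. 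The main obstacle is precisely (vi)(b): the first focal value is an unwieldy rational function of $(\kappa_6,\kappa_8)$ on $\mathcal H_c$, and the crux is to pin down its sign rigorously in the two limiting regimes so as to guarantee the change of sign; granting that, (vii) and (viii) are structural consequences of this sign change and the competitive-system theory assembled in (i)--(iii).
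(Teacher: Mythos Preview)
Your treatment of (i)--(vii) matches the paper's approach closely; the only minor differences are that for $\det J(x^*)<0$ in (iii) the paper simply cites an external result whereas you compute it directly, and for (vii) the paper invokes the supercritical Hopf from (vi) rather than the nonemptiness of $\mathcal H_u$ together with (iii) --- both routes are fine.

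The one genuine gap is in (viii). You perturb into $\mathcal H_s$ first, obtaining a stable equilibrium with a small unstable limit cycle, and then try to conjure the outer stable limit cycle by appealing to ``the finiteness in (iii)''. But (iii) is the Zhu--Smith theorem, whose hypothesis (C) requires the equilibrium to be \emph{unstable}; once you are in $\mathcal H_s$ with a stable equilibrium, neither the existence of a stable limit cycle nor the finiteness conclusion of (iii) is available to you. The competitive Poincar\'e--Bendixson theorem alone does not rule out, for instance, that every orbit outside the unstable cycle eventually converges to the equilibrium through the third dimension.

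The fix is simply to reverse the order, which is exactly what the paper does: stay on $\mathcal H_c$ at a point where $L_1>0$. There the equilibrium \emph{is} unstable (weakly repelling on its centre manifold), so (iii) applies and furnishes a stable limit cycle bounded away from the equilibrium. \emph{Then} perturb into $\mathcal H_s$: the subcritical Hopf spawns a small unstable limit cycle, the equilibrium becomes asymptotically stable, and the previously established stable limit cycle persists under the small perturbation. The paper also offers a second, cleaner route via a Bautin point ($L_1=0$, $L_2<0$) that guarantees both limit cycles are hyperbolic.
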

\begin{proof}
Since the network is strongly connected, it is permanent by \Cref{thm:permanence1}, thus, (i) follows.

With $\widetilde{z}=-z$, the differential equation \eqref{eq:F_original_ode} in the new coordinates $(x,y,\widetilde{z})$ take the form
\begin{align*}
\begin{split}
\dot{x} &= - \kappa_1 x y - \kappa_2 \widetilde{z} - \kappa_3 \widetilde{z} - \kappa_4 x + \kappa_5 x - \kappa_6 x^2, \\
\dot{y} &= - \kappa_1 x y - \kappa_2 \widetilde{z} - \kappa_7 \widetilde{z} - \kappa_8 y, \\
\dot{\widetilde{z}} &= -\kappa_1 x y - \kappa_2 \widetilde{z} - \kappa_3 \widetilde{z} - \kappa_4 x - \kappa_7 \widetilde{z} - \kappa_8 y
\end{split}
\end{align*}
with state space $\mathbb{R}_+ \times \mathbb{R}_+ \times \mathbb{R}_-$. The Jacobian matrix of the r.h.s.\ equals
\begin{align*}
\begin{bmatrix}
-\kappa_1 y - \kappa_4 + \kappa_5 - 2\kappa_6 x & -\kappa_1 x & -\kappa_2-\kappa_3 \\
-\kappa_1 y & -\kappa_1 x - \kappa_8 & -\kappa_2-\kappa_7 \\
-\kappa_1 y - \kappa_4 & -\kappa_1 x - \kappa_8 & -\kappa_2 - \kappa_3 - \kappa_7 \\
\end{bmatrix},
\end{align*}
a matrix whose off-diagonal entries are negative on $\mathbb{R}_+ \times \mathbb{R}_+ \times \mathbb{R}_-$, the system is thus competitive, proving (ii).

Statement (iii) follows by the application of \Cref{thm:zhu_smith}. The determinant of the Jacobian matrix at the unique positive equilibrium is negative by \cite[Theorem 5(iii)(b)]{boros:hofbauer:2022a}.

Statement (iv) follows by a short calculation.

Next, we prove (v). We eliminate $\kappa_3$, $\kappa_5$, $\kappa_7$ by \eqref{eq:111}. Thus, the remaining parameters are $\kappa_1$, $\kappa_2$, $\kappa_4$, $\kappa_6$, $\kappa_8$ with all of them being positive and additionally both of $\kappa_6 + \kappa_1   - \kappa_2$ and $ \kappa_8  + \kappa_1 - \kappa_2$ are also positive. The Jacobian matrix at the unique positive equilibrium $(1,1,1)$ equals
\begin{align*}
\begin{bmatrix}
-\kappa_2-\kappa_4-\kappa_6 & -\kappa_1 & \kappa_2 + \kappa_4 \\
-\kappa_1 & -\kappa_1-\kappa_8 & \kappa_1+\kappa_8 \\
\kappa_1+\kappa_4 & \kappa_1+\kappa_8 & -\kappa_1-\kappa_4-\kappa_8
\end{bmatrix}.
\end{align*}
The characteristic polynomial of this matrix equals $\lambda^3 + a_2 \lambda^2 + a_1 \lambda + a_0$, where
\begin{align*}
a_2 &= 2\kappa_1 + \kappa_2 + 2\kappa_4 + \kappa_6 + 2\kappa_8, \\
a_1 &= \kappa_1 (\kappa_2+2\kappa_4-\kappa_1) + 2\kappa_2 \kappa_8 + \kappa_4 \kappa_6 + 3 \kappa_4 \kappa_8 + 2\kappa_1 \kappa_6  +2\kappa_6 \kappa_8  , \\
a_0 &= \kappa_4(\kappa_1\kappa_6+\kappa_1\kappa_8+\kappa_6\kappa_8).
\end{align*}
By the Routh--Hurwitz criterion, all the three eigenvalues have negative real part if and only if $a_0$, $a_2$, $a_2 a_1 - a_0$ are all positive. Clearly, $a_0$ and $a_2$ are positive. Under $\kappa_2 + 2\kappa_4 - \kappa_1 \geq 0$, the product $a_2 a_1$ is positive as well. The positivity of $a_2 a_1 - a_0$ then follows, because the negative terms $-\kappa_1\kappa_4\kappa_6$, $-\kappa_1\kappa_4\kappa_8$, $-\kappa_4\kappa_6\kappa_8$ coming from $-a_0$ are compensated by the positive terms $6\kappa_1\kappa_4\kappa_6$, $6\kappa_1\kappa_4\kappa_8$, $9\kappa_4\kappa_6\kappa_8$ coming from $a_2 a_1$. This concludes the proof of (v).

To prove (vi), notice that in the special case $\kappa_1= 1$, $\kappa_2=\frac15$, $\kappa_4=\frac15$, the expression $a_2a_1-a_0$ simplifies to $h(\kappa_6,\kappa_8)$ (up to a positive multiplier). Part (a) in (vi) then follows immediately by the Routh--Hurwitz criterion. For proving part (b), one computes the first focal value, $L_1$, along the curve $\mathcal{H}_c$. By applying formula \cite[(5.39)]{kuznetsov:2004} 
we find that $L_1<0$ when $\kappa_6$ is close to $0$, while $L_1>0$ when $\kappa_8$ is close to $0$ (for the calculations, see the respective Mathematica Notebook in the GitHub repository \cite{boros:2022}). We have depicted in the left panel of \Cref{fig:F} the sign of $h(\kappa_6,\kappa_8)$ and the sign of the first focal value. This concludes the proof of (vi).

To prove (vii), it suffices to note that a supercritical Andronov--Hopf bifurcation is possible by (vi).

\begin{figure}[ht]
    \centering
    \begin{tabular}{cc}
    \includegraphics[scale=0.25]{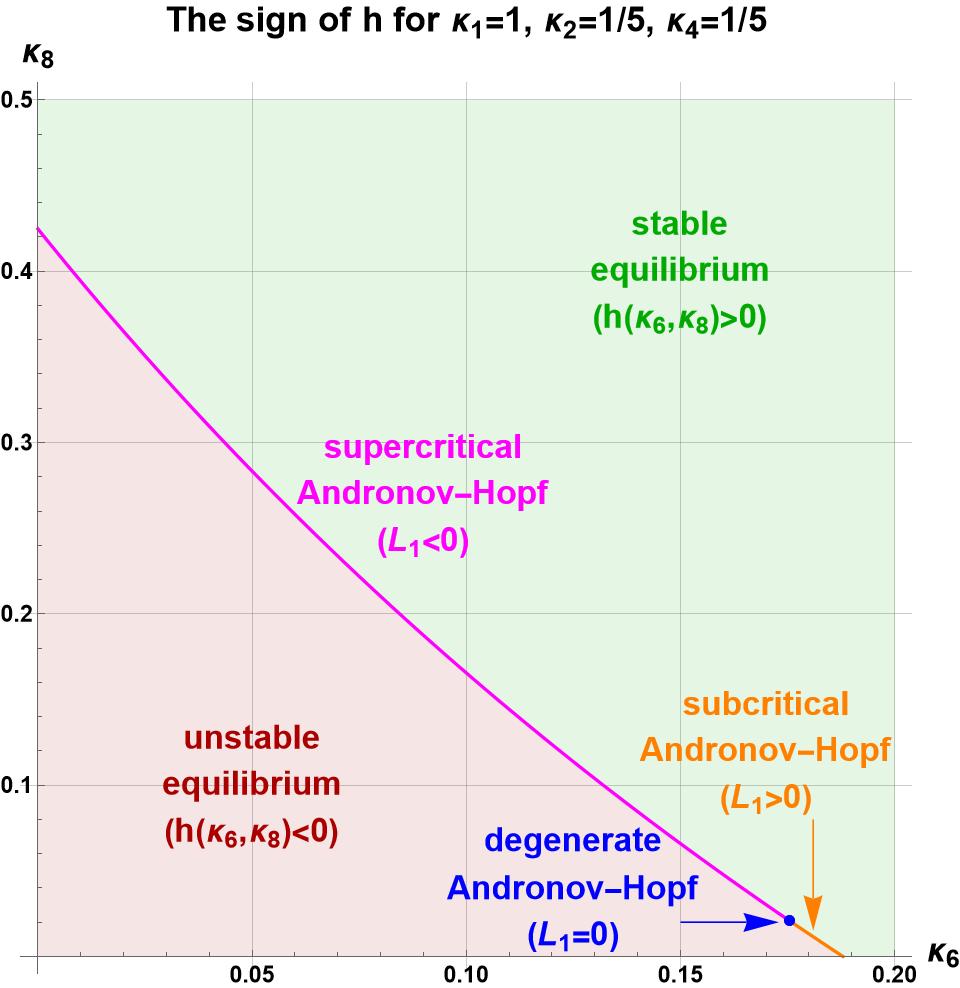} &
    \includegraphics[scale=0.19,trim=1.8cm 0 0 0]{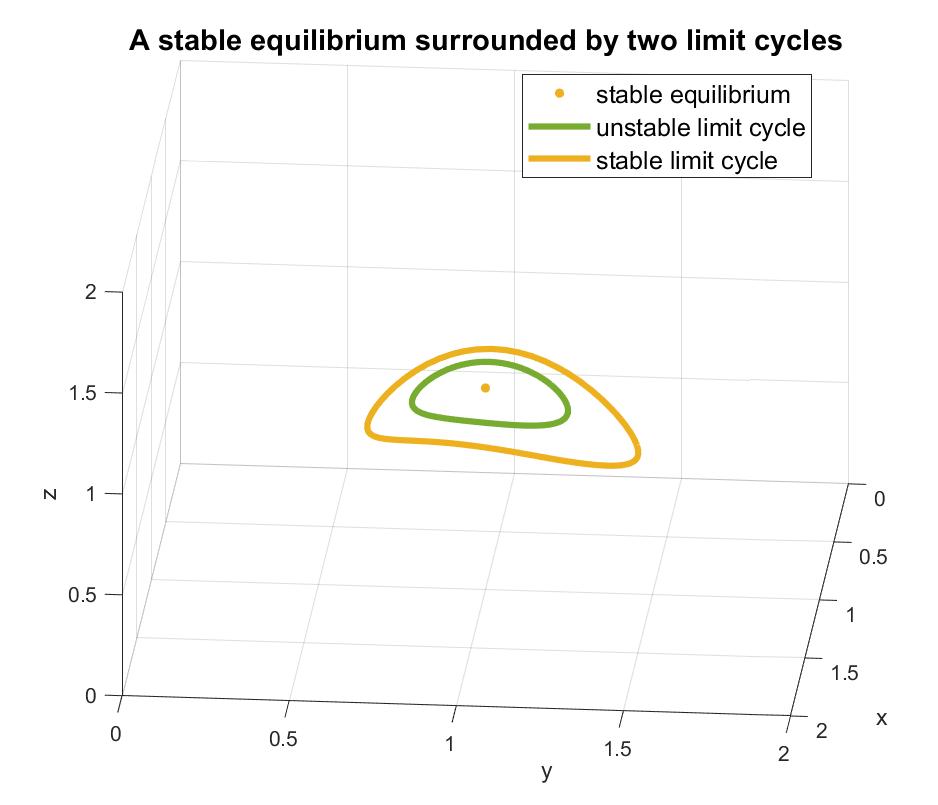}
    \end{tabular}
    \caption{Take the mass-action system \eqref{eq:F_original_ode} with $\kappa_1 = 1$, $\kappa_2 = \frac15$, $\kappa_3 = \frac15$, $\kappa_4=\frac15$, $\kappa_5 = \kappa_6 + \frac{4}{5}$, $\kappa_7 = \kappa_8 + \frac{4}{5}$. Left panel: the bifurcation diagram with $\kappa_6$ and $\kappa_8$ kept as parameters. Right panel: the two limit cycles for $\kappa_6 = 0.187$ and $\kappa_8 = 0.0052$ (notice that these parameters fall slightly above the orange curve of the left panel); the figure was created using MATCONT \cite{dhooge:govaerts:kuznetsov:2003}.}
    \label{fig:F}
\end{figure}

To prove (viii), fix rate constants such that the unique positive equilibrium has a negative real eigenvalue and a pair of purely imaginary eigenvalues, and the first focal value is positive. Such rate constants exist by (vi), see the orange curve in the left panel of \Cref{fig:F}. Since the equilibrium is unstable, there exists a stable limit cycle by (iii). Now perturb the rate constants slightly and create an unstable limit cycle on the center manifold via a subcritical Andronov--Hopf bifurcation. The equilibrium then becomes asymptotically stable. We illustrated the two limit cycles in the right panel of \Cref{fig:F}. This concludes the proof of (viii).

We now show an alternative way to prove (viii) that avoids the application of (iii) and furthermore guarantees that the two limit cycles are hyperbolic. Fix rate constants such that the unique positive equilibrium has a negative real eigenvalue and a pair of purely imaginary eigenvalues, the first focal value vanishes, and the second focal value is negative. One confirms that along the curve $\mathcal{H}_c$ in (vi) there is one such point (shown in blue on the left panel in \Cref{fig:F}). The calculation of the second focal value can be performed numerically by following the method described in \cite[Sections 8.7.1 and 8.7.3]{kuznetsov:2004}, see the respective Mathematica Notebook in the GitHub repository \cite{boros:2022} for the details.
Then perturb the rate constants $\kappa_6$ and $\kappa_8$ slightly along $\mathcal{H}_c$ to make the first focal value positive. Then a stable limit cycle is born via a degenerate Andronov--Hopf bifurcation (or Bautin bifurcation) and the equilibrium becomes unstable. Finally, perturb the rate constants $\kappa_6$ and $\kappa_8$ away from $\mathcal{H}_c$ to $\mathcal{H}_s$. With this, an unstable limit cycle is born via a subcritical Andronov--Hopf bifurcation and the equilibrium becomes asymptotically stable.
\end{proof}

We remark that the choice $\kappa_1=1$, $\kappa_2=\frac15$, $\kappa_4=\frac15$ in part (vi) of \Cref{thm:F_original} does not play a crucial role in the sense that for any $\kappa_1$, $\kappa_2$, $\kappa_4$ with $\kappa_2 + 2\kappa_4<\kappa_1$ we have the same qualitative picture: there is a curve in the positive quadrant of the $(\kappa_6,\kappa_8)$-plane that connects the two axes, and where the unique positive equilibrium undergoes an Andronov--Hopf bifurcation, which is supercritical close to the $\kappa_8$-axis and subcritical close to the $\kappa_6$-axis.

\medskip

Before we turn to the homogenisation of the Feinberg--Berner oscillator \eqref{eq:F_original_ode}, we briefly discuss detailed balance and complex balance \cite{horn:1972,yu:craciun:2018}. For a reversible mass-action system, a positive vector $x$ is \emph{detailed balanced} if for each reversible reaction the rate of the forward and the backward reaction at $x$ equals. For a mass-action system, a positive vector $x$ is \emph{complex balanced} if at every complex $i$ the total rate at $x$ over the reactions whose reactant complex is $i$ equals the total rate at $x$ over the reactions whose product complex is $i$. Clearly, a detailed balanced vector is also complex balanced. Further, a detailed balanced or a complex balanced vector is an equilibrium. Finally, if a mass-action system admits a detailed balanced (respectively, complex balanced) equilibrium then all equilibria are detailed balanced (respectively, complex balanced). The following proposition tells us about detailed balance and complex balance in the mass-action system \eqref{eq:F_original_ode}.

\begin{proposition}\label{prop:F_CB}
For the mass-action system \eqref{eq:F_original_ode} the following are equivalent.
\begin{enumerate}[(a)]
    \item The system is complex balanced.
    \item The system is detailed balanced.
    \item The relation $\frac{\kappa_1}{\kappa_2} \frac{\kappa_5}{\kappa_6} \frac{\kappa_7}{\kappa_8}=1$ holds.
\end{enumerate}
\end{proposition}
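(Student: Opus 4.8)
The plan is to establish the two biconditionals $(a)\Leftrightarrow(b)$ and $(b)\Leftrightarrow(c)$, working throughout at the level of a single positive vector $(x,y,z)$ and then invoking the principle recalled just before the proposition: if one positive equilibrium is complex balanced (resp.\ detailed balanced), then the system is so. Since the text already notes that a detailed balanced vector is automatically complex balanced, the implication $(b)\Rightarrow(a)$ comes for free, so the substance of $(a)\Leftrightarrow(b)$ is the reverse direction $(a)\Rightarrow(b)$.

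For $(a)\Rightarrow(b)$ I would exploit the shape of the reaction graph. It has $5$ complexes and $4$ reversible reactions and is connected, hence it is a \emph{tree}, with $\mathsf{Z}$ as the degree-three hub, $\mathsf{X}$ of degree two, and $\mathsf{Y}$, $2\mathsf{X}$, $\mathsf{X}+\mathsf{Y}$ as leaves. The claim is that on a tree complex balance forces detailed balance, which I would argue by pruning leaves: at a leaf complex $v$ with unique neighbour $u$, the complex balance equation at $v$ reads (rate of $u\to v$) $=$ (rate of $v\to u$), which is exactly detailed balance on the edge $\{u,v\}$; because this net flux vanishes, the edge contributes equally to the in- and out-flow at $u$, so it may be deleted without disturbing the balance equations at the remaining complexes, and the reduced graph is again a tree. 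Iterating yields detailed balance on every edge. For this concrete network one can bypass the induction: the leaf balances at $\mathsf{Y}$, at $2\mathsf{X}$, and at $\mathsf{X}+\mathsf{Y}$ give $\kappa_7 z=\kappa_8 y$, $\kappa_5=\kappa_6 x$, and $\kappa_1 xy=\kappa_2 z$ directly, whereupon the balance at $\mathsf{Z}$ collapses to $\kappa_3 z=\kappa_4 x$ and the balance at $\mathsf{X}$ becomes redundant; these four relations are precisely the detailed balance equations.

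For $(b)\Leftrightarrow(c)$ I would simply solve the detailed balance system. The three equations $\kappa_5=\kappa_6 x$, $\kappa_3 z=\kappa_4 x$, $\kappa_7 z=\kappa_8 y$ determine $x=\kappa_5/\kappa_6$, then $z=\kappa_4\kappa_5/(\kappa_3\kappa_6)$, then $y=\kappa_4\kappa_5\kappa_7/(\kappa_3\kappa_6\kappa_8)$ uniquely as positive numbers for every choice of rate constants, and substituting these into the remaining equation $\kappa_1 xy=\kappa_2 z$ reduces, after cancelling the common factor $\kappa_4\kappa_5/(\kappa_3\kappa_6)$, to the single scalar constraint $\frac{\kappa_1}{\kappa_2}\frac{\kappa_5}{\kappa_6}\frac{\kappa_7}{\kappa_8}=1$. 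Hence a positive detailed balanced vector exists if and only if $(c)$ holds, giving $(b)\Leftrightarrow(c)$.

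The step demanding the most care is $(a)\Rightarrow(b)$, and specifically resisting the temptation to conclude that detailed balance is automatic. Although the reaction graph is a tree and therefore has no graph-theoretic cycles, detailed balance is \emph{not} unconditional here: the network has deficiency one, reflected in the linear dependence $(-1,-1,1)+(1,0,0)=(0,-1,1)$ among the reaction vectors of $\mathsf{X}+\mathsf{Y}\to\mathsf{Z}$, $\mathsf{X}\to 2\mathsf{X}$, and $\mathsf{Y}\to\mathsf{Z}$. This ``hidden'' non-cyclic relation is exactly what produces the Wegscheider-type condition $(c)$, and the key point of the proposition is that the very same condition governs both balancing notions.
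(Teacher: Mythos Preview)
Your proof is correct and follows essentially the same route as the paper's: both use the fact that the undirected reaction graph is a tree to get $(a)\Leftrightarrow(b)$, and both solve the detailed balance system explicitly for $(b)\Leftrightarrow(c)$. The paper simply asserts the tree fact without argument, whereas you supply the leaf-pruning justification; your computation for $(b)\Leftrightarrow(c)$ is the same as the paper's (and in fact cleaner, since the paper appears to have interchanged the expressions for $y$ and $z$).
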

\begin{proof}
Note that if we replace each reversible reaction with an undirected edge in \eqref{eq:F_original_ode} then we get a tree. For such reversible networks, complex balance and detailed balance are equivalent. By definition, the mass-action system \eqref{eq:F_original_ode} is detailed balanced if there exists $(x,y,z)\in\mathbb{R}^3_+$ such that
\begin{align} \label{eq:F_CB}
\begin{split}
\kappa_1 x y &= \kappa_2 z, \\
\kappa_3 z &= \kappa_4 x, \\
\kappa_5 x &= \kappa_6 x^2, \\
\kappa_7 z &= \kappa_8 y. \\
\end{split}
\end{align}
Solving the last three equations in \eqref{eq:F_CB} for $x$, $y$, $z$ yields $x=\frac{\kappa_5}{\kappa_6}$, $y = \frac{\kappa_4}{\kappa_3}\frac{\kappa_5}{\kappa_6}$, $z = \frac{\kappa_4}{\kappa_3}\frac{\kappa_5}{\kappa_6}\frac{\kappa_7}{\kappa_8}$. It is compatible with the first equation in \eqref{eq:F_CB} if and only if $\frac{\kappa_1}{\kappa_2} \frac{\kappa_5}{\kappa_6} \frac{\kappa_7}{\kappa_8}=1$ holds. This concludes the proof.
\end{proof}


We now homogenise the network in \eqref{eq:F_original_ode} and obtain the mass-conserving mass-action system
\begin{align} \label{eq:F_homog_ode}
\begin{aligned}
\begin{tikzpicture}[scale=1.7]

\node (P1) at (1/2,4/3) {$\mathsf{X} + \mathsf{Y}$};
\node (P2) at (3/2,4/3) {$\mathsf{Z} + \mathsf{W}$};

\node (P3) at (1/2,2/3) {$\mathsf{X} + \mathsf{W}$};
\node (P4) at (3/2,2/3) {$2\mathsf{X}$};

\node (P5) at (0,0) {$\mathsf{X}$};
\node (P6) at (1,0) {$\mathsf{Z}$};
\node (P7) at (2,0) {$\mathsf{Y}$};

\draw[arrows={-stealth},transform canvas={yshift=2pt}] (P1) to node[above] {$\kappa_1$} (P2);
\draw[arrows={-stealth},transform canvas={yshift=-2pt}] (P2) to node[below] {$\kappa_2$} (P1);

\draw[arrows={-stealth},transform canvas={yshift=2pt}] (P3) to node[above] {$\kappa_5$} (P4);
\draw[arrows={-stealth},transform canvas={yshift=-2pt}] (P4) to node[below] {$\kappa_6$} (P3);

\draw[arrows={-stealth},transform canvas={yshift=2pt}] (P5) to node[above] {$\kappa_4$} (P6);
\draw[arrows={-stealth},transform canvas={yshift=-2pt}] (P6) to node[below] {$\kappa_3$} (P5);

\draw[arrows={-stealth},transform canvas={yshift=2pt}] (P6) to node[above] {$\kappa_7$} (P7);
\draw[arrows={-stealth},transform canvas={yshift=-2pt}] (P7) to node[below] {$\kappa_8$} (P6);

\node at (4.5,2/3)
{$\begin{aligned}
\dot{x} &= - \kappa_1 x y + \kappa_2 z w + \kappa_3 z - \kappa_4 x + \kappa_5 x w - \kappa_6 x^2, \\
\dot{y} &= - \kappa_1 x y + \kappa_2 z w + \kappa_7 z - \kappa_8 y, \\
\dot{z} &= \kappa_1 x y - \kappa_2 z w - \kappa_3 z + \kappa_4 x - \kappa_7 z + \kappa_8 y, \\
\dot{w} &= \kappa_1 x y - \kappa_2 z w - \kappa_5 x w + \kappa_6 x^2.
\end{aligned}$};

\end{tikzpicture}
\end{aligned}
\end{align}
Since the network is reversible, there exists a positive equilibrium in every stoichiometric class $x+y+z+w=c$ with $c>0$ \cite[Theorem 1]{boros:2019}. The uniqueness of the positive equilibrium in each of these stoichiometric classes follows by application of the Deficiency-One Algorithm \cite{feinberg:1988,feinberg:1995b}. In fact, since it is possible to parametrise the set of positive equilibria, one sees directly the existence and uniqueness. In the following proposition we collected a few properties of the mass-action system \eqref{eq:F_homog_ode}.

\begin{theorem}
For the mass-action system \eqref{eq:F_homog_ode} the following statements hold.
\begin{enumerate}[(i)]
\item The set of positive equilibria is $\{(x^*(t),y^*(t),z^*(t),w^*(t))\colon t>0\}$, where
\begin{align*}
x^*(t) &= t,\\
y^*(t) &= \frac{\kappa_2 \kappa_3 \kappa_6 t + \kappa_7(\kappa_2 \kappa_4 + \kappa_3 \kappa_5)}{\kappa_1 \kappa_3 \kappa_5 t + \kappa_8(\kappa_2 \kappa_4 + \kappa_3 \kappa_5)}\frac{\kappa_4}{\kappa_3}t,\\
z^*(t) &= \frac{\kappa_4}{\kappa_3}t, \\
w^*(t) &= \frac{\kappa_1 \kappa_3 \kappa_6 t + (\kappa_1 \kappa_4 \kappa_7 + \kappa_3 \kappa_6 \kappa_8)}{\kappa_1 \kappa_3 \kappa_5 t + \kappa_8(\kappa_2 \kappa_4 + \kappa_3 \kappa_5)}t.
\end{align*}
\item There exists a unique positive equilibrium in every stoichiometric class $x+y+z+w=c$ with $c>0$.
\newpage
\item The following are equivalent.
\begin{enumerate}[(a)]
    \item The system is complex balanced.
    \item The system is detailed balanced.
    \item The relation $\frac{\kappa_1}{\kappa_2} \frac{\kappa_5}{\kappa_6} \frac{\kappa_7}{\kappa_8}=1$ holds.
    \item The set of positive equilibria is $\{(a t^\alpha,b t^\beta, c t^\gamma, d t^\delta)\colon t>0\}$ with some $a$, $b$, $c$, $d$ positive numbers and $\alpha$, $\beta$, $\gamma$, $\delta$ real numbers.
    \item The formula $y^*(t)$ in (i) above is linear in $t$.
    \item The formula $w^*(t)$ in (i) above is linear in $t$.
\end{enumerate}
\item There exist rate constants and a stoichiometric class with an asymptotically stable equilibrium and two limit cycles (an unstable and a stable one).
\item The system is permanent.
\end{enumerate}
\end{theorem}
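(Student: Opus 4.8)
The plan is to treat the five assertions in turn, deriving the equilibrium parametrisation first and then using it for (ii) and (iii), invoking the inheritance machinery for (iv), and the Hofbauer--So permanence criterion for (v).

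For (i) I would write the right-hand side of \eqref{eq:F_homog_ode} in terms of the four net reaction fluxes $J_1=\kappa_1 xy-\kappa_2 zw$, $J_2=\kappa_5 xw-\kappa_6 x^2$, $J_3=\kappa_4 x-\kappa_3 z$, $J_4=\kappa_7 z-\kappa_8 y$, so that $\dot x=-J_1+J_2-J_3$, $\dot y=-J_1+J_4$, $\dot w=J_1-J_2$ (and $\dot z$ is then fixed by mass conservation). Imposing $\dot w=\dot y=\dot x=0$ forces $J_1=J_2=J_4$ and $J_3=0$. The relation $J_3=0$ gives $z=\tfrac{\kappa_4}{\kappa_3}x$; putting $x=t$, the two remaining equations $J_1=J_4$ and $J_2=J_4$ form a linear system in $(y,w)$ with positive determinant whose solution is exactly the stated $y^*(t)$, $w^*(t)$, and positivity of all numerators and denominators for $t>0$ shows the displayed curve is precisely the positive equilibrium set. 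For (ii) I would note that each component is strictly increasing in $t$: $x^*$ and $z^*$ are linear, while $y^*$ and $w^*$ have the shape $\tfrac{At^2+Bt}{Ct+D}$ with $A,B,C,D>0$, whose derivative has numerator $ACt^2+2ADt+BD>0$. Hence the total mass $\sigma(t)=x^*+y^*+z^*+w^*$ is continuous and strictly increasing with $\sigma(0^+)=0$ and $\sigma(\infty)=\infty$, so each level set $\sigma(t)=c$ is met exactly once, giving existence and uniqueness of the positive equilibrium in every stoichiometric class.

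For (iii), the equivalence of complex and detailed balance follows because each of the three linkage classes is, as an undirected graph, a tree, exactly as in \Cref{prop:F_CB}. Detailed balance requires $J_1=J_2=J_3=J_4=0$ at a positive point; solving $J_2=J_3=J_4=0$ for $w,z,y$ in terms of $x$ and substituting into $J_1=0$ yields the single compatibility condition $\tfrac{\kappa_1}{\kappa_2}\tfrac{\kappa_5}{\kappa_6}\tfrac{\kappa_7}{\kappa_8}=1$, giving (b)$\Leftrightarrow$(c). Substituting this relation into the formulas of (i) collapses $y^*(t)$ and $w^*(t)$ to linear functions (each quadratic numerator becomes a scalar multiple of the linear denominator), yielding (c)$\Leftrightarrow$(e)$\Leftrightarrow$(f); and when (c) holds all four components are monomials of degree one, so (c)$\Rightarrow$(d), while (d) forces $y^*,w^*$ to be monomials and hence linear, giving (d)$\Rightarrow$(c).

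Part (iv) is the conceptual heart. Here I would invoke the inheritance result \cite[Theorem 1]{banaji:boros:hofbauer:2021}: system \eqref{eq:F_homog_ode} is the homogenisation of \eqref{eq:F_original_ode}, and the alternative proof of \Cref{thm:F_original}(viii) produces rate constants for which \eqref{eq:F_original_ode} has an asymptotically stable equilibrium encircled by one unstable and one stable \emph{hyperbolic} limit cycle (born through the Bautin scenario). By \cite[Theorem 1]{banaji:boros:hofbauer:2021} these two hyperbolic cycles are inherited, for a suitable stoichiometric class, by \eqref{eq:F_homog_ode}. The step requiring care is that the homogenisation theorem as stated transfers only the limit cycles; to obtain the asymptotically stable equilibrium together with the two cycles of the correct (one stable, one unstable) type, I would use that on large stoichiometric classes the reduced three-dimensional flow is a $C^1$-small perturbation of \eqref{eq:F_original_ode} (with $\kappa_2,\kappa_5$ effectively rescaled by the abundant $w\approx c$), so that the hyperbolic equilibrium and both hyperbolic cycles persist with their stability indices. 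This transfer of the equilibrium's stability is the main obstacle, and the hyperbolicity of all three invariant sets in the source system---guaranteed precisely by the Bautin construction---is exactly what makes it go through.

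For (v) I would fix a stoichiometric class $\Delta=\{x+y+z+w=c\}$, a compact simplex, and apply \Cref{thm:permanence2a}, since the homogenised network is not strongly connected and \Cref{thm:permanence1} is unavailable. A face-by-face analysis shows that whenever a species is absent its time-derivative is nonnegative ($\dot x|_{x=0}=z(\kappa_2 w+\kappa_3)\geq0$, $\dot y|_{y=0}=z(\kappa_2 w+\kappa_7)\geq0$, $\dot z|_{z=0}=\kappa_1 xy+\kappa_4 x+\kappa_8 y\geq0$, $\dot w|_{w=0}=\kappa_1 xy+\kappa_6 x^2\geq0$), so each single-species face repels inward and its dynamics funnel into the unique boundary equilibrium $(0,0,0,c)$; there are no other boundary equilibria and (by monotonicity of the absent coordinate) no boundary periodic orbits, whence the maximal boundary-invariant set is $M=\{(0,0,0,c)\}$, which is isolated. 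It remains to verify $W^s(M)\subseteq\partial\Delta$, i.e.\ that $(0,0,0,c)$ repels into $\Delta^\circ$. The linearisation of the $(x,y,z)$-dynamics there (with $w=c$) is the Metzler, irreducible matrix $B=\begin{bmatrix}\kappa_5 c-\kappa_4 & 0 & \kappa_2 c+\kappa_3\\ 0 & -\kappa_8 & \kappa_2 c+\kappa_7\\ \kappa_4 & \kappa_8 & -\kappa_2 c-\kappa_3-\kappa_7\end{bmatrix}$ with $\det B=\kappa_8 c(\kappa_2\kappa_4+\kappa_3\kappa_5)>0$. Since $\det(-B)<0$, the $Z$-matrix $-B$ cannot be a nonsingular $M$-matrix, so the Perron--Frobenius eigenvalue of $B$ is strictly positive with a strictly positive eigenvector pointing into $\Delta^\circ$; this positive transverse eigenvalue supplies an average Lyapunov function and hence $W^s(M)\subseteq\partial\Delta$, so \Cref{thm:permanence2a} yields permanence for every $c>0$.
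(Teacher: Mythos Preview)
Your proposal is correct and follows essentially the same route as the paper: direct computation for (i)--(iii), the homogenisation inheritance result \cite[Theorem~1]{banaji:boros:hofbauer:2021} applied to \Cref{thm:F_original}(viii) for (iv), and \Cref{thm:permanence2a} for (v) with the same quasipositive Jacobian at $(0,0,0,c)$ and the same $\det>0$ argument forcing a positive dominant eigenvalue. If anything, you are slightly more careful than the paper in (iv), where you rightly flag that the cited inheritance theorem literally transfers only the hyperbolic limit cycles and that the asymptotic stability of the equilibrium comes along via the same $C^1$-perturbation mechanism on large stoichiometric classes.
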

\begin{proof}
One proves (i) by a direct calculation.

Since for any $a$, $b$, $c$, $d$ positive numbers the function $t \mapsto \frac{at^2+bt}{ct+d}$ is strictly increasing on $(0,\infty)$, statement (ii) follows immediately from the parametrisation in (i).

We now prove (iii). The equivalence of (a), (b), (c) can be shown in the same way as in \Cref{prop:F_CB}. The equivalence of (c), (d), (e), (f) follows by a direct computation, using the parametrisation in (i).

Statement (iv) is an immediate consequence of part (viii) in \Cref{thm:F_original} and \cite[Theorem 1]{banaji:boros:hofbauer:2022}.

To prove (v), we use \Cref{thm:permanence2a}. For any fixed $c>0$ consider the mass-action system \eqref{eq:F_homog_ode} in the tetrahedron
\begin{align*}
\Delta = \{(x,y,z,w)\in\mathbb{R}^4_{\geq0} \colon x+y+z+w=c\}.
\end{align*}
The only equilibrium on the boundary of $\Delta$ is $E_\mathsf{W}=(0,0,0,c)$. If we eliminate $w$ by $w = c-x-y-z$, the $3 \times 3$ Jacobian matrix at the equilibrium $E_\mathsf{W}$ is given by
\begin{align*}
J = \begin{bmatrix}
- \kappa_4 + c \kappa_5 & 0  & c \kappa_2 + \kappa_3 \\
0  &  - \kappa_8 & c\kappa_2 + \kappa_7 \\
 \kappa_4 &   \kappa_8 & -c \kappa_2 - \kappa_3 - \kappa_7
\end{bmatrix}.
\end{align*}
The determinant of this matrix is $c \kappa_8(\kappa_2\kappa_4 + \kappa_3\kappa_5) > 0$. Also, $J$ is quasipositive, i.e., its off-diagonal terms are nonnegative. Hence, the stability modulus of $J$ (i.e., its largest (or rightmost) eigenvalue) is positive. This shows that the equilibrium $E_\mathsf{W}$ is not saturated (see \cite{hofbauer:1990}), and therefore no solution from $\Delta^\circ = \Delta \cap \mathbb{R}^4_+$ converges to $E_\mathsf{W}$, and $\{E_\mathsf{W}\}$ is an isolated invariant set in $\Delta$. Since $\{E_\mathsf{W}\}$ is the maximal invariant set within the boundary of $\Delta$, \Cref{thm:permanence2a} with $M=\{E_\mathsf{W}\}$ shows permanence. 
\end{proof}

\newpage
\section{Wilhelm--Heinrich oscillator} \label{sec:WH}

In 1995, Thomas Wilhelm and Reinhart Heinrich presented the smallest bimolecular chemical reaction system with an Andronov--Hopf bifurcation \cite{wilhelm:heinrich:1995}. Namely, they introduced the mass-action system
\begin{align}\label{eq:WH_original_ode}
\begin{aligned}
\begin{tikzpicture}[scale=1.7]

\node (P1) at (0,0)  {$\mathsf{X}$};
\node (P2) at (0,2/3)  {$2\mathsf{X}$};
\node (P3) at (1,0)  {$\mathsf{Z}$};
\node (P4) at (2,0)  {$\mathsf{Y}$};
\node (P5) at (2,2/3) {$\mathsf{X} + \mathsf{Y}$};
\node (P6) at (3,0) {$\mathsf{0}$};

\draw[arrows={-stealth}] (P1) to node[left] {$\kappa_1$} (P2);
\draw[arrows={-stealth}] (P5) to node[left] {$\kappa_2$} (P4);
\draw[arrows={-stealth}] (P4) to node[above] {$\kappa_3$} (P6);
\draw[arrows={-stealth}] (P1) to node[above] {$\kappa_4$} (P3);
\draw[arrows={-stealth}] (P3) to node[above] {$\kappa_5$} (P4);

\node at (4.5,1/3)
{$\begin{aligned}
\dot{x} &= (\kappa_1 - \kappa_4)x - \kappa_2 x y, \\
\dot{y} &= \kappa_5 z - \kappa_3 y, \\
\dot{z} &= \kappa_4 x - \kappa_5 z.
\end{aligned}$};

\end{tikzpicture}
\end{aligned}
\end{align}
It is minimal in the sense that it has the smallest number of terms on the r.h.s.\ of the differential equation (two monomials in each line; it is a $3$-dimensional S-system) and there is only one quadratic term. In 1996, the authors studied the Andronov--Hopf bifurcation \cite{wilhelm:heinrich:1996} and in 2012, Hal Smith analysed the system further \cite{smith:2012}. We summarise their findings in the following theorem.

\begin{theorem}\label{thm:WH_original}
For the mass-action system \eqref{eq:WH_original_ode} the following statements hold.
\begin{enumerate}[(i)]
\item For $\kappa_1-\kappa_4>0$ there exists a unique positive equilibrium, it is given by
\begin{align*}
(x^*,y^*,z^*) = \frac{\kappa_1-\kappa_4}{\kappa_2}\left(\frac{\kappa_3}{\kappa_4}, 1, \frac{\kappa_3}{\kappa_5}\right).
\end{align*}
\item The system is competitive.
\item All forward trajectories are bounded, and for $\kappa_1-\kappa_4>0$ the system is permanent.
\item The unique positive equilibrium undergoes a supercritical Andronov--Hopf bifurcation at $\kappa_1 -\kappa_4 = \kappa_3 + \kappa_5$.
\item The unique positive equilibrium is globally asymptotically stable  for $0 < \kappa_1 -\kappa_4 \leq \kappa_3 + \kappa_5$.
\item For $\kappa_1 -\kappa_4 > \kappa_3 + \kappa_5$, the unique positive equilibrium is unstable, it is surrounded by a stable limit cycle, and there are only finitely many periodic orbits.
\end{enumerate}
\end{theorem}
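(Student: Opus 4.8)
The plan is to dispatch the six parts with the competitive-systems toolkit of \Cref{sec:prelim}, in close parallel to the treatment of the Feinberg--Berner system. For (i), setting the three right-hand sides to zero gives, for $x>0$, first $y=(\kappa_1-\kappa_4)/\kappa_2$ from $\dot x=0$, then $\kappa_5 z=\kappa_3 y$ from $\dot y=0$ and $z=(\kappa_4/\kappa_5)x$ from $\dot z=0$; back-substitution yields the stated $(x^*,y^*,z^*)$, which is positive and unique exactly when $\kappa_1>\kappa_4$. For (ii), the Jacobian has nonzero off-diagonal entries $-\kappa_2 x<0$, $\kappa_5>0$, $\kappa_4>0$; as in \eqref{eq:F_ode_cmpttv} I would pass to $\widetilde z=-z$, which flips the two positive entries and makes the off-diagonal pattern nonpositive on $\mathbb R_+\times\mathbb R_+\times\mathbb R_-$, so the system is competitive. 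The same Jacobian is irreducible, since its nonzero off-diagonal entries form the strongly connected digraph $1\to2\to3\to1$.

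The linear content of (iv), (v), (vi) comes from one computation: at $(x^*,y^*,z^*)$ the $(1,1)$ entry of the Jacobian vanishes and a short expansion gives $p(\lambda)=\lambda^3+(\kappa_3+\kappa_5)\lambda^2+\kappa_3\kappa_5\lambda+\kappa_3\kappa_5(\kappa_1-\kappa_4)$. Hence $a_0=\kappa_3\kappa_5(\kappa_1-\kappa_4)>0$, $a_2=\kappa_3+\kappa_5>0$, $a_1a_2-a_0=\kappa_3\kappa_5(\kappa_3+\kappa_5-(\kappa_1-\kappa_4))$, and $\det J^*=-a_0<0$ throughout. By Routh--Hurwitz the equilibrium is linearly asymptotically stable iff $\kappa_1-\kappa_4<\kappa_3+\kappa_5$, carries a purely imaginary pair $\pm i\sqrt{\kappa_3\kappa_5}$ (with a negative real eigenvalue) exactly at $\kappa_1-\kappa_4=\kappa_3+\kappa_5$, and is unstable beyond. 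For (iv), transversality is immediate because $\frac{d}{d\mu}(a_1a_2-a_0)=-\kappa_3\kappa_5\neq0$ with $\mu=\kappa_1-\kappa_4$, and supercriticality I would settle by computing the first focal value $L_1$ at the critical value via the reduction recalled in \Cref{sec:app} and checking $L_1<0$. For (vi), with $\kappa_1-\kappa_4>\kappa_3+\kappa_5$ the unique equilibrium is unstable and has $\det J^*<0$, the system is permanent by (iii) and competitive and irreducible by (ii); \Cref{thm:zhu_smith} then yields finitely many periodic orbits with at least one orbitally asymptotically stable, exactly as in \Cref{thm:F_original}(iii).

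The boundedness assertion in (iii) is the step I expect to be the main obstacle. Since $\mathsf X$ is autocatalytic with no self-limitation, no box and no linear functional $ax+by+cz$ is trapping (on $\{y<y^*\}$ the $x$-component of the field points outward), so I would instead exploit the delayed negative feedback $x\to z\to y$ that closes back into $\dot x=\kappa_2 x(y^*-y)$: integrating $\frac{d}{dt}\ln x=(\kappa_1-\kappa_4)-\kappa_2 y$ together with the two linear equations pins the time-averages of $x,y,z$ to their equilibrium values, and a dissipativity estimate along these lines upgrades this to a compact absorbing set. For persistence, every boundary trajectory converges to the origin, which is transversally repelling at rate $\kappa_1-\kappa_4>0$; an average-Lyapunov argument then gives uniform persistence, and boundedness together with persistence gives permanence.

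Finally, for (v), local asymptotic stability on $0<\kappa_1-\kappa_4\le\kappa_3+\kappa_5$ follows from the Routh--Hurwitz analysis above (at the endpoint it is the supercritical Hopf point, still stable because $L_1<0$). By the competitive Poincar\'e--Bendixson theorem and the dissipativity from (iii), every nonempty bounded $\omega$-limit set is either the equilibrium or a periodic orbit, so it suffices to exclude periodic orbits throughout this range. I would do so with the Li--Muldowney Bendixson criterion applied to the second additive compound matrix $J^{[2]}$, whose eigenvalues are the sums $\lambda_i+\lambda_j$ and which is therefore Hurwitz precisely while $\kappa_1-\kappa_4<\kappa_3+\kappa_5$, losing stability exactly at the Hopf value. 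Constructing an explicit state-dependent weight that renders the associated matrix measure negative on the absorbing set---in particular taming the entry $-\kappa_2 x$ of $J^{[2]}$---is the second place I expect genuine work. With periodic orbits excluded, the locally asymptotically stable equilibrium attracts every bounded trajectory, which yields global asymptotic stability.
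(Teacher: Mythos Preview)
The paper does not actually prove \Cref{thm:WH_original}: it is stated there as a summary of results from Wilhelm--Heinrich (1995, 1996) and Smith (2012), with no accompanying proof. So there is no paper proof to compare against; what can be said is that your outline follows essentially the route taken in those references. Your computations for (i), (ii), the characteristic polynomial and Routh--Hurwitz analysis underlying (iv)--(vi), and the invocation of \Cref{thm:zhu_smith} for (vi) are all correct and are indeed how these parts are handled.

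Where your proposal is genuinely incomplete is exactly where you flag it. For the supercriticality in (iv) you promise but do not carry out the focal-value computation; Wilhelm and Heinrich did compute $L_1$ explicitly and found it negative, and this is a real calculation, not a formality. For (iii), your time-average argument as written is circular: the identity $\tfrac{1}{T}\int_0^T y\,d\tau \to y^*$ obtained from $\tfrac{d}{d\tau}\ln x = (\kappa_1-\kappa_4)-\kappa_2 y$ presupposes that $\ln x$ stays bounded, which is precisely the dissipativity you are trying to establish. Smith's actual argument for boundedness exploits the cascade structure $x\to z\to y$ together with the feedback $\dot x = \kappa_2 x(y^*-y)$ more carefully (ruling out unbounded $x$ by showing it would force $y$ above $y^*$ for long stretches), and his global-stability proof for (v) does use a Li--Muldowney/second-compound criterion, but with a specific state-dependent scaling that you would need to produce. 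Your diagnosis of the two obstacles is accurate; what remains is to close them rather than sketch them.
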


As remarked by Hal Smith, the main open problem about the dynamics of the mass-action system \eqref{eq:WH_original_ode} is the number of limit cycles in case $\kappa_1 -\kappa_4 > \kappa_3 + \kappa_5$. He says, numerical simulations suggest there is only one.

We now homogenise the network in \eqref{eq:WH_original_ode} and obtain the mass-conserving mass-action system
\begin{align} \label{eq:WH_homog_ode}
\begin{aligned}
\begin{tikzpicture}[scale=1.7]

\node (P1) at (0,0)   {$\mathsf{X}+\mathsf{W}$};
\node (P2) at (1,0)   {$2\mathsf{X}$};
\node (P3) at (0,-2/3)  {$\mathsf{X}+\mathsf{Y}$};
\node (P4) at (1,-2/3)  {$\mathsf{Y}+\mathsf{W}$};
\node (P5) at (-1,-4/3) {$\mathsf{X}$};
\node (P6) at (0,-4/3)  {$\mathsf{Z}$};
\node (P7) at (1,-4/3)  {$\mathsf{Y}$};
\node (P8) at (2,-4/3)  {$\mathsf{W}$};

\draw[arrows={-stealth}] (P1) to node[above] {$\kappa_1$} (P2);
\draw[arrows={-stealth}] (P3) to node[above] {$\kappa_2$} (P4);
\draw[arrows={-stealth}] (P7) to node[above] {$\kappa_3$} (P8);
\draw[arrows={-stealth}] (P5) to node[above] {$\kappa_4$} (P6);
\draw[arrows={-stealth}] (P6) to node[above] {$\kappa_5$} (P7);

\node at (3.8,-2/3)
{$\begin{aligned}
\dot{x} &= \kappa_1 x w - \kappa_4 x - \kappa_2 x y, \\
\dot{y} &= \kappa_5 z - \kappa_3 y, \\
\dot{z} &= \kappa_4 x - \kappa_5 z, \\
\dot{w} &= \kappa_2 x y + \kappa_3 y - \kappa_1 x w.
\end{aligned}$};

\end{tikzpicture}
\end{aligned}
\end{align}
By \cite[Theorem 1]{banaji:boros:hofbauer:2022}, the mass-action system \eqref{eq:WH_homog_ode} admits a stable limit cycle, i.e., for some rate constants and in some stoichiometric class there exists a stable limit cycle. In fact, as per the following theorem, for some rate constants in some stoichiometric class the unique positive equilibrium is asymptotically stable and is surrounded by two small limit cycles (an unstable and a stable one). Note that this behaviour is ruled out for the Wilhelm--Heinrich oscillator \eqref{eq:WH_original_ode}.

We now analyse the homogenised Wilhelm--Heinrich oscillator \eqref{eq:WH_homog_ode}. In particular, we show that the Andronov--Hopf bifurcation could be supercritical, subcritical, or degenerate. W.l.o.g.\ we assume $\kappa_1=1$ and for better readability we introduce $p=\kappa_2$, $q=\kappa_3$, $r=\kappa_4$, $s=\kappa_5$. Thus, we study the differential equation
\begin{align}\label{eq:WH_homog_ode_pqrs}
\begin{split}
\dot{x} &= x w - r x - p x y, \\
\dot{y} &= s z - q y, \\
\dot{z} &= r x - s z, \\
\dot{w} &= p x y + q y - x w.
\end{split}
\end{align}

\begin{theorem}\label{thm:WH_homog}
For the mass-action system \eqref{eq:WH_homog_ode_pqrs} the following statements hold.
\begin{enumerate}[(i)]
\item In the stoichiometric class $x+y+z+w=c$ there is no positive equilibrium if $c\leq r$ and there is exactly one positive equilibrium if $c>r$. The set of positive equilibria is given by
\begin{align}\label{eq:WH_homog_equilibrium}
(x^*(t),y^*(t),z^*(t),w^*(t)) = t \left(1,\frac{r}{q},\frac{r}{s},\frac{pr}{q}\right) + (0,0,0,r) \text{ for } t>0.
\end{align}
\item The positive equilibrium \eqref{eq:WH_homog_equilibrium} is asymptotically stable on $\mathcal{H}_s$, undergoes an Andronov--Hopf bifurcation at $\mathcal{H}_c$, and is unstable on $\mathcal{H}_u$, where
\begin{align*}
\mathcal{H}_s &= \{(p,q,r,s,t)\in\mathbb{R}^5_+ \colon h(p,q,r,s,t)>0\},\\
\mathcal{H}_c &= \{(p,q,r,s,t)\in\mathbb{R}^5_+ \colon h(p,q,r,s,t)=0\},\\
\mathcal{H}_u &= \{(p,q,r,s,t)\in\mathbb{R}^5_+ \colon h(p,q,r,s,t)<0\}
\end{align*}
with $h(p,q,r,s,t)=(q+r+s)t^2 + [(q+s)^2-prs]t + qs(q+s)$.
\item On $\mathcal{H}_c$, up to a positive factor, the first focal value, $L_1$, equals
\begin{align}\label{eq:WH_homog_L1}
(q+r+s)t^2-(q+s)(q+r+s)t-2qs(q+s).
\end{align}
In particular, the Andronov--Hopf bifurcation could be supercritical, subcritical, or degenerate.
\item For fixed $p,q,r,s>0$ there exists exactly $\begin{cases} 0\\1\\2 \end{cases}$ positive $t$ with
\begin{align*}
(p,q,r,s,t)\in\mathcal{H}_c \text{ if }prs\begin{cases}< \\=\\> \end{cases} (q+s)^2+2\sqrt{qs(q+s)(q+r+s)}.
\end{align*}
\item For $p,q,r,s>0$ with $prs>(q+s)^2+2\sqrt{qs(q+s)(q+r+s)}$, the first focal value is negative at the smaller $t$ with $(p,q,r,s,t)\in\mathcal{H}_c$.
\item There exist rate constants and there exists a stoichiometric class such that the positive equilibrium is asymptotically stable and is surrounded by two limit cycles (an unstable and a stable one).
\item For $c>r$ the system is permanent in the positive stoichiometric class $x+y+z+w=c$.
\end{enumerate}
\end{theorem}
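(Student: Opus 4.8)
The plan is to use the conservation law $x+y+z+w=c$ to reduce to three dimensions throughout, and to treat the seven parts in order. For (i) I would solve the steady-state equations directly: $\dot z=0$ gives $z=\frac rs x$, then $\dot y=0$ gives $y=\frac rq x$, and $\dot x=0$ with $x>0$ forces $w=r+py=r+\frac{pr}q x$, while $\dot w=0$ is automatic. Writing $x=t$ recovers the parametrisation \eqref{eq:WH_homog_equilibrium}, and since $x^*+y^*+z^*+w^*=r+t\big(1+\tfrac rq+\tfrac rs+\tfrac{pr}q\big)$ increases strictly from $r$ to $\infty$, there is exactly one positive equilibrium on the class $x+y+z+w=c$ when $c>r$ and none when $c\le r$. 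For (ii), after eliminating $w=c-x-y-z$ I would evaluate the Jacobian of the reduced system at the equilibrium; using $c-x-y-z=r+py$ the diagonal $x$-entry collapses to $-t$, and the characteristic polynomial is $\lambda^3+a_2\lambda^2+a_1\lambda+a_0$ with $a_2=q+s+t$, $a_1=qs+t(q+s)+rt$, and $a_0=t\big(qs+r(q+s)+prs\big)$. As $a_0,a_2>0$ always, the Routh--Hurwitz criterion reduces stability to the sign of $a_2a_1-a_0$, and a short expansion gives exactly $a_2a_1-a_0=h(p,q,r,s,t)$. On $\mathcal H_c$ the polynomial factors as $(\lambda+a_2)(\lambda^2+a_1)$ with $a_1>0$, producing a conjugate pair $\pm i\sqrt{a_1}$ and one negative real root; differentiating $h$ across $\mathcal H_c$ confirms transversality and hence a genuine Andronov--Hopf bifurcation.

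Part (iii) is the main obstacle. Here I would compute the first focal value $L_1$ on $\mathcal H_c$ by the formula recalled in \Cref{sec:app} (equivalently \cite[(5.39)]{kuznetsov:2004}), carried out symbolically, and then simplify modulo the constraint $h=0$; the expected outcome is that $L_1$ equals the polynomial \eqref{eq:WH_homog_L1} up to a positive factor. This is the one genuinely heavy step, since the raw expression is large and only collapses after imposing $h=0$.

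For (iv) I would read $h(p,q,r,s,\cdot)$ as an upward parabola in $t$ with positive constant term $qs(q+s)$, so its (real) roots share the sign of minus the linear coefficient: both are positive exactly when $(q+s)^2-prs<0$, and they are real and distinct exactly when the discriminant $[(q+s)^2-prs]^2-4(q+r+s)qs(q+s)$ is positive, which rearranges to $prs>(q+s)^2+2\sqrt{sq(q+s)(q+r+s)}$; since this threshold already forces $prs>(q+s)^2$, the number of positive roots is $0$, $1$, or $2$ according to the stated trichotomy. For (v), writing $P(t)$ for the polynomial in \eqref{eq:WH_homog_L1}, I would verify the identity
\[
P(t)=t\,h'(t)-h(t)-(q+s)\big[(q+r+s)t+qs\big],\qquad h'=\partial h/\partial t.
\]
At the smaller root $t_-$ one has $h(t_-)=0$, and because $t_-$ lies to the left of the (positive) vertex of the upward parabola $h$ in this regime, one has $h'(t_-)<0$; both surviving terms are then negative, so $P(t_-)<0$.

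Finally, (vi) follows the Bautin route of the second proof of \Cref{thm:F_original}(viii), which is forced here because the reduced Jacobian has off-diagonal entries of both signs at every point, so the system is not competitive and \Cref{thm:zhu_smith} is unavailable. By (iii) the first focal value is proportional to $P(t)$ on $\mathcal H_c$; it is negative at $t_-$ by (v), but for $p$ large the larger Hopf point $t_+$ exceeds the positive root of the (upward) parabola $P$, so $P(t_+)>0$. Hence $L_1$ changes sign along $\mathcal H_c$ and a degenerate Hopf (Bautin) point exists; after checking numerically, e.g.\ by MATCONT \cite{dhooge:govaerts:kuznetsov:2003}, that the second focal value is negative there, the standard unfolding (perturbing along $\mathcal H_c$ to make $L_1$ positive, then off $\mathcal H_c$ into $\mathcal H_s$) yields the coexistence of a stable equilibrium, an inner unstable limit cycle, and an outer stable limit cycle. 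For (vii) I would mimic the permanence argument for the homogenised Feinberg system: in the tetrahedron $\Delta=\{x+y+z+w=c\}$ the only boundary equilibrium is $E_\mathsf{W}=(0,0,0,c)$ (setting $x=0$ forces $z=y=0$, and $x>0$ rules out the other faces), and it is the maximal invariant set on $\partial\Delta$. Eliminating $w$, the Jacobian at $E_\mathsf{W}$ has eigenvalues $c-r$, $-q$, $-s$ with the two stable eigenvectors tangent to the invariant face $x=0$; for $c>r$ the eigenvalue $c-r>0$ shows $E_\mathsf{W}$ is not saturated, so $W^s(E_\mathsf{W})\subseteq\partial\Delta$ and $\{E_\mathsf{W}\}$ is isolated. \Cref{thm:permanence2a} with $M=\{E_\mathsf{W}\}$ then gives permanence.
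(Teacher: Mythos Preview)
Your proof is correct and matches the paper's approach almost entirely: the same reduction via the conservation law, the same Jacobian and Routh--Hurwitz computation for (ii), the same discriminant analysis for (iv), the same Bautin mechanism (with a numerically verified second focal value) for (vi), and the same permanence argument via \Cref{thm:permanence2a} with $M=\{E_\mathsf{W}\}$ for (vii). In (iii) the paper is more explicit than you are: it writes down the transformation matrix $T$ that puts $J$ into canonical form before applying formula \eqref{eq:L1n3_canonical}, whereas you defer this step to symbolic computation; and it exhibits concrete parameter values (e.g.\ $p=8$, $q=r=2s=2$) realising each sign of $L_1$, which you replace by the asymptotic argument $t_+\to\infty$ as $p\to\infty$.

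The one genuine departure is in (v). The paper introduces auxiliary variables $a=q+r+s$, $b=prs-(q+s)^2$, $c=qs(q+s)$, $d=qs$, writes the smaller Hopf root as $t_-=\tfrac{b-\sqrt{b^2-4ac}}{2a}$ and the positive root of $P$ explicitly, and then proves the inequality $t_-<t_P$ by estimating the right-hand side from below by $\sqrt{4ac}$ and invoking $\sqrt{A^2+B^2}<A+B$. Your identity
\[
P(t)=t\,h'(t)-h(t)-(q+s)\big[(q+r+s)t+qs\big]
\]
is a cleaner route: at $t_-$ one has $h(t_-)=0$, and since $t_-$ lies to the left of the vertex of the upward parabola $h$ (both roots being positive in this regime), also $h'(t_-)<0$; both surviving terms are then negative and $P(t_-)<0$ follows without ever writing down a root. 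This buys you a fully elementary argument with no inequalities between radicals, at the cost of having to spot the identity.
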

\begin{proof}
Statement (i) follows by a direct calculation.

To prove statement (ii), we eliminate $w$ using the conservation law $x+y+z+w=c$, where $c = x^*(t) + y^*(t) + z^*(t) + w^*(t)$. Thus, we are left with a differential equation in $3$ variables. The $3\times 3$ Jacobian matrix, $J$, at $(x^*(t),y^*(t),z^*(t))$ is given by
\begin{align*}
J = \begin{bmatrix}
-t & -(p+1)t & -t \\
0  & -q      & s \\
r  & 0       & -s
\end{bmatrix}.
\end{align*}
The characteristic polynomial of $J$ is
\begin{align*}
\lambda^3 + (q+s+t)\lambda^2 + (qs+qt+rt+st)\lambda + (qr+qs+rs+prs)t.
\end{align*}
It has a real and negative root. By the Routh--Hurwitz criterion one finds that the other two eigenvalues
\begin{itemize}
\item have negative real part if $(p,q,r,s,t)\in\mathcal{H}_s$,
\item are purely imaginary if $(p,q,r,s,t)\in\mathcal{H}_c$,
\item have positive real part if $(p,q,r,s,t)\in\mathcal{H}_u$.
\end{itemize}
This concludes the proof of (ii).

Application of formula \cite[(5.39)]{kuznetsov:2004} shows that the first focal value indeed has the same sign as \eqref{eq:WH_homog_L1} (for the calculation, see the respective Mathematica Notebook in the GitHub repository \cite{boros:2022}).
With $p=8$, $q=1$, $r=2$, $s=1$, $t=\frac{3\pm\sqrt{7}}{2}$ we have $(p,q,r,s,t)\in \mathcal{H}_c$, and by \eqref{eq:WH_homog_L1} we have $L_1<0$ for $t=\frac{3-\sqrt{7}}{2}$ and $L_1>0$ for $t=\frac{3+\sqrt{7}}{2}$. Additionally, with $p=3(1+\sqrt{2})$, $q=1$, $r=2$, $s=1$, $t=1+\sqrt{2}$ we have $(p,q,r,s,t)\in \mathcal{H}_c$ and $L_1=0$. Thus, the Andronov--Hopf bifurcation could indeed be supercritical, subcritical, or degenerate. This concludes the proof of (iii).

Statement (iv) follows by a short calculation.

To prove (v), we first introduce the parameters
\begin{align*}
a &= q+r+s,\\
b &= prs-(q+s)^2,\\
c &= qs(q+s),\\
d &= qs.
\end{align*}
With this, for fixed $p$, $q$, $r$, $s>0$, the quadratic function $t\mapsto h(p,q,r,s,t)$ has exactly two positive roots provided that $b>0$ and $b^2-4ac>0$. The smaller one of these two roots equals $\frac{b-\sqrt{b^2-4ac}}{2a}$. In terms of the new parameters, formula \eqref{eq:WH_homog_L1} becomes $at^2-\frac{ac}{d}t-2c$, which is negative for $0<t<\frac{\frac{ac}{d}+\sqrt{\frac{a^2c^2}{d^2}+8ac}}{2a}$. Thus, statement (v) follows once we show that
\begin{align} \label{eq:WH_abcd}
b-\sqrt{b^2-4ac} < \frac{ac}{d}+\sqrt{\frac{a^2c^2}{d^2}+8ac}
\end{align}
holds for all $a>0$, $b>0$, $c>0$, $d>0$ with $b^2-4ac>0$. To show the inequality \eqref{eq:WH_abcd}, estimate the r.h.s.\ from below by $\sqrt{4ac}$ and add $\sqrt{b^2-4ac}$ to both sides. Then we arrive at $b < \sqrt{4ac} + \sqrt{b^2-4ac}$, which holds under the stated assumptions on $a$, $b$, $c$, because $\sqrt{A^2+B^2}<A+B$ holds for all $A, B>0$.

To prove (vi), set $q=1$, $r=2$, $s=1$ and keep $p$ and $t$ as parameters. Since $h(p,1,2,1,t) = -2(2t^2+(2-p)t+1)$, an Andronov--Hopf bifurcation happens at the curve $2t^2+(2-p)t+1=0$ in the $(t,p)$-plane. Along this curve, the first focal value vanishes at $(t^*,p^*)=(1+\sqrt{2},3(1+\sqrt{2}))$, while it is negative (respectively, positive) for $t<t^*$ (respectively, $t>t^*$), see the left panel in \Cref{fig:WH}. Using the method described in \cite[Sections 8.7.1 and 8.7.3]{kuznetsov:2004}, one computes the second focal value at $(t^*,p^*)$ and finds it is negative (see the respective Mathematica Notebook in the GitHub repository \cite{boros:2022} for the details). Thus, for $(t^*,p^*)$, the positive equilibrium is asymptotically stable. Perturb slightly $(t,p)$ along the Andronov--Hopf curve in the direction $t>t^*$, with this the first focal value becomes positive, the equilibrium is repelling on the center manifold, and is surrounded by a stable limit cycle that is born via a Bautin bifurcation. Finally, perturb away from the Andronov--Hopf curve in such a way that the real parts of the nonreal eigenvalues at the equilibrium become negative. With this, an unstable limit cycle is created via a subcritical Andronov--Hopf bifurcation.

To prove (vii), we use \Cref{thm:permanence2a}. For any fixed $c>r$ consider the mass-action system \eqref{eq:WH_homog_ode_pqrs}
in the tetrahedron 
\begin{align*}
\Delta=\{(x,y,z,w)\in\mathbb{R}^4_{\geq0}\colon x+y+z+ w = c\}.
\end{align*}
The only equilibrium on the boundary of $\Delta$ is  $E_\mathsf{W} = (0,0,0,c)$. One finds that the eigenvalues of the Jacobian matrix at $E_\mathsf{W}$  within $\Delta$ are $-q$, $-s$, $c-r$, two of them are negative and one is positive for $c>r$ ($E_\mathsf{W}$  undergoes a transcritical bifurcation at $c=r$, giving birth to the unique positive equilibrium \eqref{eq:WH_homog_equilibrium}). Notice that the face $\{(x,y,z,w)\in\Delta \colon x=0\}$ is forward invariant and every solution starting on this face converges to $E_\mathsf{W}$. Thus, this face is the stable manifold of $E_\mathsf{W}$, and we have $W^s(E_\mathsf{W}) \subseteq \partial \Delta$. Since the third eigenvalue at $E_\mathsf{W}$ is positive, no solution starting in the interior of $\Delta$ can converge to $E_\mathsf{W}$. 
By \Cref{thm:permanence2a} with  $M = \{E_\mathsf{W}\}$, the system is then permanent. 
\end{proof}

\begin{figure}[ht]
    \centering
    \begin{tabular}{cc}
    \includegraphics[scale=0.25]{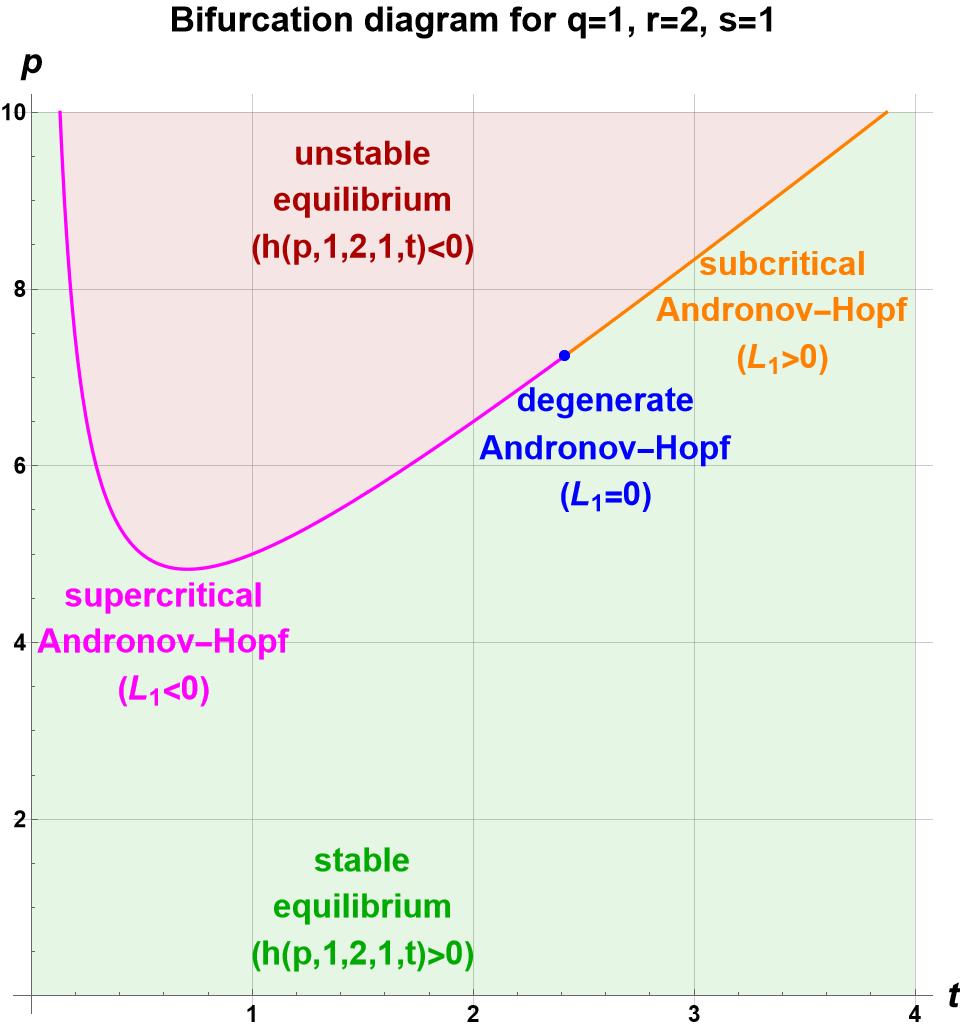} &
    \includegraphics[scale=0.19,trim=1.8cm 0 0 0]{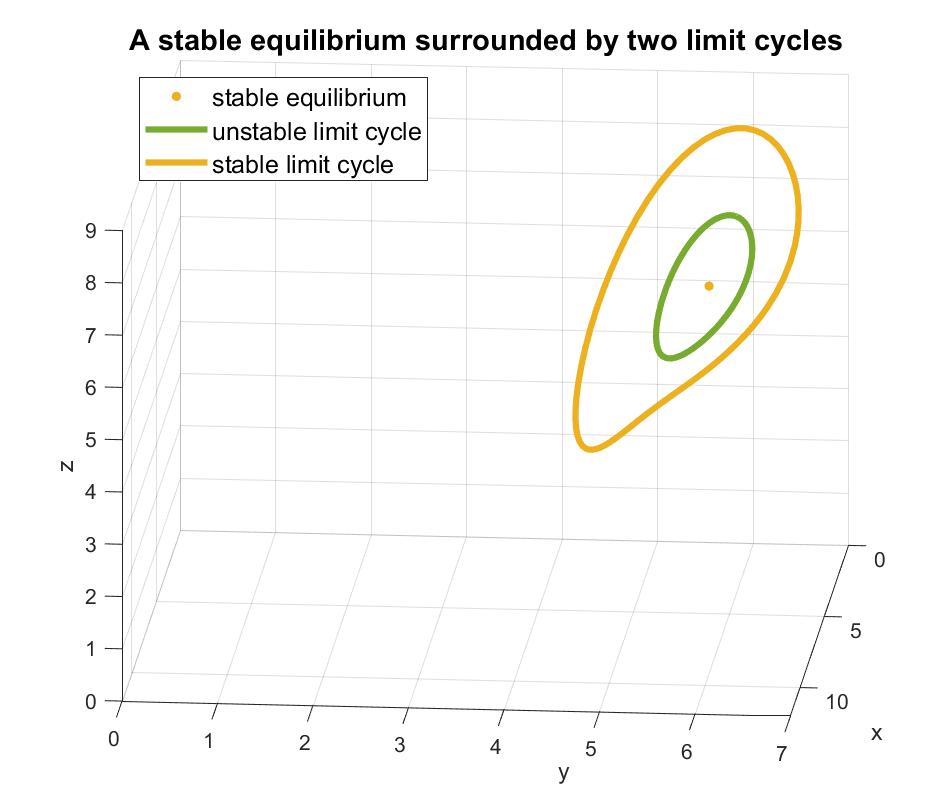}
    \end{tabular}
    \caption{Take the mass-action system \eqref{eq:WH_homog_ode_pqrs} with $q = 1$, $r = 2$, $s = 1$. Left panel: the bifurcation diagram with $p$ and $t$ kept as parameters. Right panel: the two limit cycles for $p=8$ and $t=2.84$ (notice that these parameters fall slightly to the right of the orange curve of the left panel); the figure was created using MATCONT \cite{dhooge:govaerts:kuznetsov:2003}.}
    \label{fig:WH}
\end{figure}

\newpage
\section{Wilhelm oscillator} \label{sec:W}

In 2009, Thomas Wilhelm mentioned a bimolecular reaction network with $3$ species and $4$ reactions that admit a supercritical Andronov--Hopf bifurcation, and thus, a stable limit cycle \cite[Discussion]{wilhelm:2009}. Namely, he introduced the mass-action system
\begin{align}\label{eq:W_original_ode}
\begin{aligned}
\begin{tikzpicture}[scale=1.7]

\node (P1) at (0,0)    {$\mathsf{Y}$};
\node (P2) at (1,0)    {$2\mathsf{Y}$};
\node (P3) at (0,-1/2) {$2\mathsf{X}$};
\node (P4) at (1,-1/2) {$\mathsf{Z}$};
\node (P5) at (0,-1)   {$\mathsf{Y} + \mathsf{Z}$};
\node (P6) at (1,-1)   {$\mathsf{X} + \mathsf{Z}$};
\node (P7) at (0,-3/2)   {$2\mathsf{Z}$};
\node (P8) at (1,-3/2)   {$\mathsf{0}$};

\draw[arrows={-stealth}] (P1) to node[above] {$\kappa_1$} (P2);
\draw[arrows={-stealth}] (P3) to node[above] {$\kappa_2$} (P4);
\draw[arrows={-stealth}] (P5) to node[above] {$\kappa_3$} (P6);
\draw[arrows={-stealth}] (P7) to node[above] {$\kappa_4$} (P8);

\node at (3,-3/4)
{$\begin{aligned}
\dot{x} &= \kappa_3 yz - 2\kappa_2 x^2,   \\
\dot{y} &= \kappa_1 y - \kappa_3 yz, \\
\dot{z} &= \kappa_2 x^2 - 2\kappa_4 z^2.
\end{aligned}$};

\end{tikzpicture}
\end{aligned}
\end{align}

Since two-species bimolecular mass-action systems do not admit a limit cycle \cite{pota:1983}, one needs at least three species for the existence of a limit cycle. For a three-species bimolecular mass-action system to admit a limit cycle, the underlying network must have a three-dimensional stoichiometric subspace \cite{pota:1985}. A three-species mass-action system with only three reactions and a three-dimensional stoichiometric subspace has deficiency zero, and hence, by \Cref{thm:dfc0_noLC}, does not admit any periodic solution. Consequently, in this sense, the bimolecular mass-action system \eqref{eq:W_original_ode} is a minimal one with a limit cycle. A classification of all bimolecular mass-action systems with three species and four reactions that admit an Andronov--Hopf bifurcation has been recently worked out in \cite{banaji:boros:2022}.

We now analyse the Wilhelm oscillator \eqref{eq:W_original_ode}. In particular, we show that the Andronov--Hopf bifurcation is always supercritical.
\begin{theorem} \label{thm:W_original}
For the mass-action system \eqref{eq:W_original_ode} the following statements hold.
\begin{enumerate}[(i)]
\item The system is competitive.
\item There exists a unique positive equilibrium, it is given by
\begin{align*}
(x^*,y^*,z^*) = \left(\sqrt{\frac{2\kappa_4}{\kappa_2}}\frac{\kappa_1}{\kappa_3},\frac{4\kappa_1\kappa_4}{\kappa_3^2},\frac{\kappa_1}{\kappa_3}\right).
\end{align*}
\item The unique positive equilibrium undergoes a supercritical Andronov--Hopf bifurcation at $\kappa_3 = 4(\kappa_4 + \sqrt{2\kappa_2 \kappa_4})$, the first focal value is negative.
\item The unique positive equilibrium is
\begin{center}
$\begin{cases}
\text{asymptotically stable} \\ \text{unstable}
\end{cases}$ for $\kappa_3 \begin{cases} \leq \\ >\end{cases} 4(\kappa_4 + \sqrt{2\kappa_2 \kappa_4})$.
\end{center}
\item The unique positive equilibrium is surrounded by a stable limit cycle for $\kappa_3$ slightly larger than $4(\kappa_4 + \sqrt{2\kappa_2 \kappa_4})$.
\end{enumerate}
\end{theorem}
\begin{proof}
With $\widetilde{x}=-x$, the differential equation \eqref{eq:W_original_ode} in the new coordinates $(\widetilde{x},y,z)$ takes the form
\begin{align*}
\dot{\widetilde{x}} &= 2\kappa_2 \widetilde{x}^2 - \kappa_3 yz,   \\
\dot{y} &= \kappa_1 y - \kappa_3 yz, \\
\dot{z} &= \kappa_2 \widetilde{x}^2 - 2\kappa_4 z^2
\end{align*}
with state space $\mathbb{R}_- \times \mathbb{R}_+ \times \mathbb{R}_+$. The Jacobian matrix of the r.h.s.\ equals
\begin{align*}
\begin{bmatrix}
4\kappa_2 \widetilde{x}            & -\kappa_3 z           & -\kappa_3 y \\
0                       & \kappa_1 - \kappa_3 z & -\kappa_3 y \\
2\kappa_2 \widetilde{x} & 0                     & -4\kappa_4 z
\end{bmatrix},
\end{align*}
a matrix whose off-diagonal entries are nonpositive on $\mathbb{R}_- \times \mathbb{R}_+ \times \mathbb{R}_+$, the system is thus competitive, proving (i).

Statement (ii) follows by a direct calculation.

To prove (iii), (iv), and (v), we multiply the r.h.s.\ of the differential equation \eqref{eq:W_original_ode} by the positive constant $\frac{\kappa_3}{\kappa_1\kappa_2}$ and introduce the new parameters $p=\frac{\kappa_3}{\kappa_2}$, $q = \sqrt{\frac{2\kappa_4}{\kappa_2}}$, and $r=\frac{\kappa_3}{\kappa_1}$. With this, the differential equation to be studied is
\begin{align*}
\dot{x} &= pr yz - 2r x^2,   \\
\dot{y} &= p y - pr yz, \\
\dot{z} &= r x^2 - q^2 r z^2
\end{align*}
with the unique positive equilibrium being $(x^*,y^*,z^*) = \left(\frac{q}{r}, \frac{2q^2}{pr}, \frac{1}{r}\right)$. The Jacobian matrix, $J$, at $(x^*,y^*,z^*)$ is given by
\begin{align*}
J = \begin{bmatrix}
-4q & p &  2q^2 \\
  0 & 0 & -2q^2 \\
 2q & 0 & -2q^2
\end{bmatrix}.
\end{align*}
The characteristic polynomial of $J$ equals $\lambda^3 + 2q(q+2)\lambda^2 + 4q^3 \lambda + 4pq^3$. By the Routh--Hurwitz criterion, all the three eigenvalues of $J$ have negative real part if and only if $p < 2q(q+2)$. For $p = 2q(q+2)$, the eigenvalues are $\{-p,\omega i,-\omega i\}$, where $\omega = \sqrt{\frac{\det J}{\tr J}} = 2q^\frac32$. When $p > 2q(q+2)$, one eigenvalue is real and negative, the other two eigenvalues have positive  real part. Since $p \lesseqgtr 2q(q+2)$ if and only if $\kappa_3 \lesseqgtr 4(\kappa_4 + \sqrt{2\kappa_2 \kappa_4})$, the statements (iv) and (v) will follow once we prove that the first focal value is negative. Indeed, application of formula \cite[(5.39)]{kuznetsov:2004} shows that the first focal value is negative. For the calculation, see the respective Mathematica Notebook in the GitHub repository \cite{boros:2022}.
\end{proof}


One way to construct a mass-conserving bimolecular mass-action system with a limit cycle is to homogenise the Wilhelm oscillator. This leads to the mass-action system
\begin{align}\label{eq:W_homog_ode}
\begin{aligned}
\begin{tikzpicture}[scale=1.7]

\node (P1) at (0,0)    {$\mathsf{Y}+\mathsf{W}$};
\node (P2) at (1,0)    {$2\mathsf{Y}$};
\node (P3) at (0,-1/2) {$2\mathsf{X}$};
\node (P4) at (1,-1/2) {$\mathsf{Z}+\mathsf{W}$};
\node (P5) at (0,-1)   {$\mathsf{Y} + \mathsf{Z}$};
\node (P6) at (1,-1)   {$\mathsf{X} + \mathsf{Z}$};
\node (P7) at (0,-3/2)   {$2\mathsf{Z}$};
\node (P8) at (1,-3/2)   {$2\mathsf{W}$};

\draw[arrows={-stealth}] (P1) to node[above] {$\kappa_1$} (P2);
\draw[arrows={-stealth}] (P3) to node[above] {$\kappa_2$} (P4);
\draw[arrows={-stealth}] (P5) to node[above] {$\kappa_3$} (P6);
\draw[arrows={-stealth}] (P7) to node[above] {$\kappa_4$} (P8);

\node at (3,-3/4)
{$\begin{aligned}
\dot{x} &= \kappa_3 yz -2\kappa_2 x^2,\\
\dot{y} &= \kappa_1 yw - \kappa_3 yz, \\
\dot{z} &= \kappa_2 x^2 - 2\kappa_4 z^2, \\
\dot{w} &= \kappa_2 x^2 + 2\kappa_4 z^2 - \kappa_1 yw.
\end{aligned}$};

\end{tikzpicture}
\end{aligned}
\end{align}
Since a limit cycle that is born via an Andronov--Hopf bifurcation is hyperbolic (at least when the parameters are close enough to the bifurcation point in parameter space), the homogenised Wilhelm oscillator \eqref{eq:W_homog_ode} does admit a stable limit cycle by \cite[Theorem 1]{banaji:boros:hofbauer:2022}.

At the beginning of this section we explained in what sense the Wilhelm oscillator \eqref{eq:W_original_ode} is minimal. We now argue that the homogenised Wilhelm oscillator \eqref{eq:W_homog_ode} is minimal among the mass-conserving bimolecular mass-action systems with a limit cycle. Since three-species bimolecular mass-action systems with a two-dimensional stoichiometric subspace do not admit a limit cycle \cite{pota:1985}, one needs at least four species. Since no bimolecular mass-action system with a two-dimensional stoichiometric subspace admits a limit cycle, the stoichiometric subspace has to be three-dimensional at least. A four-species mass-action system with only three reactions and a three-dimensional stoichiometric subspace is of deficiency zero, and hence, does not admit any periodic solution, see \Cref{thm:dfc0_noLC}. Consequently, in this sense, the mass-conserving bimolecular mass-action system \eqref{eq:W_homog_ode} is a minimal one with a limit cycle.

We now analyse the homogenised Wilhelm oscillator \eqref{eq:W_homog_ode}. In particular, we show that the Andronov--Hopf bifurcation is always supercritical.
\begin{theorem} \label{thm:W_homog}
For the mass-action system \eqref{eq:W_homog_ode} the following statements hold.
\begin{enumerate}[(i)]
\item The phase portrait in the stoichiometric classes $x+y+z+w=c_1$ and $x+y+z+w=c_2$ are identical (up to a scaling of all the variables by $\frac{c_2}{c_1}$).
\item In every positive stoichiometric class there exists a unique positive equilibrium, the set of positive equilibria is given by
\begin{align*}
(x^*(t),y^*(t),z^*(t),w^*(t)) = t(q,2q^2/p,1,r) \text{ for }t>0,
\end{align*}
where $p=\frac{\kappa_3}{\kappa_2}$, $q = \sqrt{\frac{2\kappa_4}{\kappa_2}}$, and $r=\frac{\kappa_3}{\kappa_1}$.
\item The unique positive equilibrium undergoes a supercritical Andronov--Hopf bifurcation at the surface
\begin{align*}
\mathcal{H}_c=\left\{(p,q,r) \in \mathbb{R}^3_+\colon r>1, q<r(r-1), p = 2q(q+2)\frac{r^2+(q+2)r+q}{r(r-1)-q}\right\}.
\end{align*}
\item The unique positive equilibrium is asymptotically stable (respectively, unstable) for $(p,q,r) \in \mathcal{H}_s \cup \mathcal{H}_c$ (respectively, $(p,q,r) \in \mathcal{H}_u$), where
\begin{align*}
\mathcal{H}_u&=\left\{(p,q,r) \in \mathbb{R}^3_+\colon r>1, q<r(r-1), p > 2q(q+2)\frac{r^2+(q+2)r+q}{r(r-1)-q}\right\},\\
\mathcal{H}_s&=\mathbb{R}^3_+ \setminus (\mathcal{H}_c \cup \mathcal{H}_u).
\end{align*}
\item There exists a stable limit cycle for parameters in $\mathcal{H}_u$ that are close enough to $\mathcal{H}_c$.
\item The system is permanent.
\end{enumerate}
\end{theorem}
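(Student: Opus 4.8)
The plan is to establish the substantive part~(vi), permanence; parts~(i)--(v) are obtained by direct computation along the lines of \Cref{thm:W_original}. By part~(i) it suffices to treat a single stoichiometric class, so I fix $c>0$ and work in the tetrahedron $\Delta = \{(x,y,z,w)\in\mathbb{R}^4_{\geq0}\colon x+y+z+w=c\}$. First I would locate the boundary equilibria: a short case analysis of the right-hand side of \eqref{eq:W_homog_ode} shows that on $\partial\Delta$ there are exactly two, namely $E_\mathsf{W}=(0,0,0,c)$ and $E_\mathsf{Y}=(0,c,0,0)$ (any equilibrium with $x>0$ is forced, via $\dot z=0$, $\dot x=0$, $\dot y=0$, to be interior). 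The face $\{y=0\}$ is forward invariant, and on it $\dot x=-2\kappa_2 x^2$ drives $x\to0$, whence $z\to0$ as well, so every orbit there tends to $E_\mathsf{W}$; the edge $\{x=0,\,z=0\}$ is invariant and carries $\dot y=\kappa_1 y(c-y)>0$, a heteroclinic orbit $\Gamma$ from $E_\mathsf{W}$ to $E_\mathsf{Y}$. Checking the remaining faces and edges (the flow enters $\Delta^\circ$ through them, while the one further invariant edge $\{x=0,\,y=0\}$ carries no complete boundary orbit, its backward orbits leaving $\Delta$ through the non-equilibrium vertex $(0,0,c,0)$) shows that the maximal invariant set of $\partial\Delta$ is $M=\{E_\mathsf{W}\}\cup\Gamma\cup\{E_\mathsf{Y}\}$, with $\alpha(x)=E_\mathsf{W}$ and $\omega(x)=E_\mathsf{Y}$ for $x\in\Gamma$. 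This is precisely the configuration required by \Cref{thm:permanence2b} with $M_1=\{E_\mathsf{W}\}$ and $M_2=\{E_\mathsf{Y}\}$, so it remains to verify that both equilibria are isolated in $\Delta$ and have their stable sets inside $\partial\Delta$.

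The main obstacle is that both boundary equilibria are non-hyperbolic: eliminating $w$, the $3\times3$ Jacobian within $\Delta$ has eigenvalues $\{0,0,\kappa_1 c\}$ at $E_\mathsf{W}$ and $\{0,0,-\kappa_1 c\}$ at $E_\mathsf{Y}$, so the linearisation settles neither the stable set nor isolation and a direct center-manifold computation is unpleasant. The resolution in both cases is to supply an explicit monotone quantity that substitutes for the missing hyperbolicity.

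For $E_\mathsf{W}$ I would use the coordinate $y$ itself. Since $\dot y=y(\kappa_1 w-\kappa_3 z)$ with $w\to c$ and $z\to0$ near $E_\mathsf{W}$, on a small neighbourhood $U$ one has $\dot y\geq\tfrac{\kappa_1 c}{4}\,y$; thus $y$ grows exponentially unless $y\equiv0$. Hence no interior orbit converges to $E_\mathsf{W}$, so $W^s(E_\mathsf{W})\subseteq\{y=0\}\subseteq\partial\Delta$. Moreover any complete orbit trapped in $U$ must have $y\equiv0$, then (from $\dot x=-2\kappa_2 x^2$, which blows up in backward time) $x\equiv0$, then (from $\dot z=-2\kappa_4 z^2$) $z\equiv0$, so the orbit equals $E_\mathsf{W}$, proving isolation.

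For $E_\mathsf{Y}$ the decisive trick is the combination $V=x+2z$, chosen so that the quadratic $x^2$-terms cancel: $\dot V=\dot x+2\dot z=\kappa_3 yz-4\kappa_4 z^2=z(\kappa_3 y-4\kappa_4 z)$. Near $E_\mathsf{Y}$ we have $y\approx c$ and $z$ small, so $\dot V>0$ whenever $z>0$; therefore $V$ is strictly increasing along every interior orbit near $E_\mathsf{Y}$. Since $V(E_\mathsf{Y})=0$ is the minimum of $V\geq0$, this rules out convergence to $E_\mathsf{Y}$, giving $W^s(E_\mathsf{Y})\subseteq\partial\Delta$. For isolation, a complete orbit trapped in a small $U$ has $V$ bounded and nondecreasing, so $\dot V\to0$ in both time directions pushes its $\alpha$- and $\omega$-limit sets into $\{z=0\}$ and then (via $\dot z=\kappa_2 x^2$) into the invariant edge $\{x=0,\,z=0\}$, whose only invariant subset near $E_\mathsf{Y}$ is $E_\mathsf{Y}$; as $V(E_\mathsf{Y})=0$ and $V$ is nondecreasing, this forces $V\equiv0$, hence the orbit lies on $\{x=z=0\}$ and equals $E_\mathsf{Y}$. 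With both equilibria isolated and their stable sets contained in $\partial\Delta$, \Cref{thm:permanence2b} yields permanence. The only genuinely delicate point is the degeneracy noted above; once the monotone quantities $y$ and $x+2z$ are identified, the rest is routine.
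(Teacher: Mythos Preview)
Your proposal is correct and follows essentially the same approach as the paper: both invoke \Cref{thm:permanence2b} with $M_1=\{E_\mathsf{W}\}$, $M_2=\{E_\mathsf{Y}\}$, identify $M$ as the edge $\{x=z=0\}$, and handle the degenerate equilibrium $E_\mathsf{Y}$ via the same monotone combination $\dot x+2\dot z=z(\kappa_3 y-4\kappa_4 z)$. Your treatment of isolation is more explicit than the paper's, but the key ideas coincide.
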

\begin{proof}
Statement (i) is an immediate consequence of the fact that the r.h.s.\ is a homogeneous polynomial (of degree two).

Statement (ii) follows by a direct calculation.

To prove (iii), (iv), and (v), we multiply the r.h.s.\ of the differential equation \eqref{eq:W_homog_ode} by the positive constant $\frac{\kappa_3}{\kappa_1\kappa_2}$ and write the differential equation using the parameters $p$, $q$, $r$:
\begin{align*}
\dot{x} &= pr yz - 2r x^2,   \\
\dot{y} &= p yw - pr yz, \\
\dot{z} &= r x^2 - q^2 r z^2, \\
\dot{w} &= r x^2 + q^2 r z^2 - p yw.
\end{align*}
We pick the particular stoichiometric class that contains the equilibrium $(x^*(t),y^*(t),z^*(t),w^*(t))$ with $t = 1$ (by (i), this step does not restrict generality). Next, we eliminate $w$ using the conservation law $x+y+z+w=c$, where $c = x^*(1) + y^*(1) + z^*(1) + w^*(1)$. Thus, we are left with a differential equation in $3$ variables. The $3\times 3$ Jacobian matrix, $J$, at $(x^*(1),y^*(1),z^*(1))$ is given by
\begin{align*}
J = \begin{bmatrix}
-4qr  &    pr &  2q^2r \\
-2q^2 & -2q^2 & -2q^2(r+1) \\
 2qr  &     0 & -2q^2r
\end{bmatrix}.
\end{align*}
The characteristic polynomial of $J$ equals $\lambda^3 + a_2 \lambda^2 + a_1 \lambda + a_0$ with
\begin{align*}
a_2 &= 2 q (q + 2 r + q r), \\
a_1 &= 2 q^2 r (p + 2 q (2 + q + r)), \\
a_0 &= 4 q^3 r^2 (2 q^2 + p (1 + q + r)).
\end{align*}
One confirms by the Routh--Hurwitz criterion that all the three eigenvalues of $J$ have negative real part if and only if $(p,q,r)\in \mathcal{H}_s$. The eigenvalues are $\{\omega i, -\omega i, \varrho\}$ with $\varrho=\tr J$ and $\omega = \sqrt{\frac{\det J}{\tr J}}$ if and only if $(p,q,r)\in\mathcal{H}_c$. Finally, one eigenvalue is real and negative, while the other two eigenvalues have positive real part if and only if $(p,q,r)\in \mathcal{H}_u$. Once we prove that the first focal value is negative, all of the statements (iii), (iv), and (v) follow. Indeed, application of formula \cite[(5.39)]{kuznetsov:2004} shows that the first focal value is negative. For the calculation, see the respective Mathematica Notebook in the GitHub repository \cite{boros:2022}.


To prove (vi), we use \Cref{thm:permanence2b}. For any fixed $c>0$ consider the mass-action system \eqref{eq:W_homog_ode} in the tetrahedron 
\begin{align*}
\Delta=\{(x,y,z,w)\in\mathbb{R}^4_{\geq0}\colon x+y+z+w =c\}.
\end{align*}
The only equilibria on the boundary of $\Delta$ are $E_\mathsf{Y} = (0,c,0,0)$ and $E_\mathsf{W} = (0,0,0,c)$. Let $M_1 = \{E_\mathsf{W}\}$ and $M_2 = \{E_\mathsf{Y}\}$. The maximal invariant subset of $\partial\Delta$ is $M = \{(x,y,z,w) \in \Delta\colon x = z = 0\}$, the edge connecting $E_\mathsf{Y}$ and $E_\mathsf{W}$. The flow there goes from $E_\mathsf{W}$ to $E_\mathsf{Y}$. At $E_\mathsf{Y}$, the eigenvalues within $\Delta$ are $-c\kappa_1$, $0$, $0$. There is a one-dimensional stable manifold (the edge $M \setminus \{E_\mathsf{W}\}$) and a two-dimensional center manifold. Since $\dot{x} + 2 \dot{z} = z(\kappa_3 y - 4\kappa_4 z)$, which is positive in a neighborhood of $E_\mathsf{Y}$, the flow goes away from $E_\mathsf{Y}$ on this center manifold. At $E_\mathsf{W}$, the eigenvalues within $\Delta$ are $c\kappa_1$, $0$, $0$. There is a one-dimensional unstable manifold (the edge $M \setminus \{E_\mathsf{Y}\}$) and a two-dimensional center manifold (the face $\{(x,y,z,w)\in\Delta\colon y=0\}$) which is the stable set $W^s(E_\mathsf{W}) \subseteq \partial \Delta$. Thus, \Cref{thm:permanence2b} applies and shows permanence.
\end{proof}

We conclude this section by collecting a few open questions:
\begin{enumerate}[(i)]
\item Is the mass-action system \eqref{eq:W_original_ode} permanent? The difficulty is that the $y$-axis is invariant and the flow there goes to infinity.
\item For the mass-action systems \eqref{eq:W_original_ode} and \eqref{eq:W_homog_ode}, does local stability of the positive equilibrium imply its global stability?
\item For the mass-action systems \eqref{eq:W_original_ode} and \eqref{eq:W_homog_ode}, whenever the equilibrium is unstable, does there exist a unique limit cycle?
\end{enumerate}
\newpage
\bibliographystyle{abbrv}
\bibliography{biblio}

\end{document}